\newtheorem{theorem}{Theorem}[section]
\newtheorem{lemma}[theorem]{Lemma}
\newtheorem{corollary}[theorem]{Corollary}
\theoremstyle{definition}
\newtheorem{definition}[theorem]{Definition}
\newtheorem{example}[theorem]{Example}
\theoremstyle{remark}
\numberwithin{equation}{section}
\newcommand{\Z}{\mathbb{Z}}
\newcommand{\R}{\mathbb{R}}
\newcommand{\N}{\mathbb{N}}
\newcommand{\Comp}{\mathbb{C}}
\newcommand{\supp}{\textrm{supp}~}
\newcommand{\Sc}{\mathcal{S}(\mathbb{R}^{n})}
\newcommand{\Sl}{\mathcal{S}'(\mathbb{R}^{n})}
\newcommand{\Dl}{\mathcal{D}'}
\newcommand{\ssupp}{\textrm{sing supp}~}
\begin{document}

\setcounter{page}{1}

\title[Spectral theory on localized Sobolev]{Spectrum of differential operators with elliptic adjoint on a scale of localized Sobolev spaces}

\author[L.M. Salge \MakeLowercase{and} E.R. Arag\~ao-Costa]{Lu\'is M\'arcio Salge$^1$ \MakeLowercase{and} \'Eder R\'itis Arag\~ao Costa$^1$}

\address{$^{1}$Instituto de Ci\^encias Matem\'aticas e de Computa\c c\~ao, University of S\~ao Paulo, CEP: 13566-590, S\~ao Carlos - SP, Brasil.}
\email{\textcolor[rgb]{0.00,0.00,0.84}{lmsalge@icmc.usp.br;
ritis@icmc.usp.br}}



\let\thefootnote\relax\footnote{Copyright 2016 by the Tusi Mathematical Research Group.}

\subjclass[2010]{Primary 39B82; Secondary 44B20, 46C05.}

\keywords{Localized Sobolev, spectrum, Laplacian, adjoint.}

\date{Received: xxxxxx; Revised: yyyyyy; Accepted: zzzzzz.
\newline \indent $^{*}$Corresponding author}

\begin{abstract} 
In this paper we provide a complete study of the spectrum of a constant coefficients differential operator on a scale of localized Sobolev spaces, $H^{s}_{loc}(I)$, which are Fr\'echet spaces. This is quite different from what we find in the literature, where all the relevant results are concerned with spectrum on Banach spaces. 

Our aim is to understand the behavior of all the three types of spectrum (point, residual and continuous) and the relation between them and those of the dual operator. The main result we present shows that there is no complex number in the resolvent set of such operators, which suggest a new way to define spectrum if we want to reproduce the classical theorems of the Spectral Theory in Fr\'echet spaces.
\end{abstract} 
\maketitle





\section{Introduction and preliminaries}


In this work we present a complete study about the spectrum of a constant coefficients differential operator of order $m\in \N$, $a(D)$, whose adjoint $a(D)\mbox{*}$ is elliptic, seen as a pseudo-differential operator on a interval $I\subset \R$, that is, seen as
$$
a(D): H^{s+m}_{0}(I) \subset H^{s}_{loc}(I) \longrightarrow H^{s}_{loc}(I),\, s\in \R.
$$
Here, $H^{s}_{loc}(I)$ is endowed with the topology generated by a family of seminorms $\left(p^{(s)}_{j}\right)_{j \in \N}$ given by $p^{(s)}_{j}(f) \doteq \displaystyle \left\|\varphi_j f\right\|_{H^{s}(\R)},\, f \in H^{s}_{loc}(I),$
where, for each $j \in \N, \, I_{j} = (a_{j},b_{j})$ is such that $[a_{j},b_{j}] \subset (a_{j+1},b_{j+1}),$ with $I = \bigcup_{j \in \N}[a_{j},b_{j}],$ and $\varphi_j \in C^{\infty}_{c}(I_{j+1})$ satisfies $\varphi_j =1$ in $[a_{j},b_{j}]$. 

When we indicate $a(D)$ as above, we mean that in $H^{s+m}_{0}(I)$ we consider the topology induced from $H^{s}_{loc}(I).$

This study was developed inspired by what happen with the Laplace operator on $L^2(I)$. Here, we replace $L^2(I)$ by $H^{s}_{loc}(I)$ and $H^1_{0}(I) \cap H^2(I)$ by $H^{m+s}_{0}(I)$, as suggested by the definitions we found in \cite{henry}.

The best conclusions we obtain are when we consider the Laplace operator on an interval $I$ as
$$
\Delta: H^{2}_{0}(I) \subset L^{2}_{loc}(I) \longrightarrow L^{2}_{loc}(I).
$$
For it, we calculate its closure and compare its spectrum in three stages:
\begin{itemize}
\item[(1)] When it is defined on $H^{2}_{0}(I).$ 

\item[(2)] When its domain is $H^{1}_{0}(I) \cap H^{2}(I)$, where we call it $\Delta_{L^2}$; and 

\item[(3)] When it is defined on $H^{2}_{loc}(I).$ This, as we are going to see, is the domain of the closure $\overline{\Delta}$.
\end{itemize}

In particular, we prove that $\sigma_c(\Delta)=\sigma_r(\Delta^{*})=\sigma_p(\overline{\Delta})=\Comp$ and $\sigma_c(\Delta_{L^2})=\Comp \setminus \left\{-\frac{\pi^2n^{2}}{l(I)^2}: n \in \N \right\}$, where $l(I)$ is the length of $I$.



\subsection{Preliminary concepts and results}\label{sec2}

In this section we present some definitions and results from Functional Analysis which were the basic tools to make this work possible.

We begin by defining Fr\'echet spaces and a consequence of Hahn-Banach Theorem for Fr\'echet spaces.

\begin{definition}
Let $X$ be a topological vector space. $X$ is said to be a Fr\'echet space if it is Hausdorff, complete and its topology is given by a countable family of seminorms.
\end{definition}

Some examples of Fr\'echet space are the space of smooth functions $C^{\infty}(\Omega),$ where $\Omega \subset \R^{n},$ the Schwartz space $\Sc$ and the localized Sobolev spaces $H^{s}_{loc}(\Omega)$ for $s \in \R.$


Usually, the dual of a Fr\'echet space is equipped with the weak* topology which is also generated by a family of semi-norms, we will use it in this work and we will explain this latter on. 

We have $[C^{\infty}_{c}(\Omega)]' = \Dl(\Omega)$ is the space of distributions.
The spaces $\Sl = [\Sc]',$ the space of tempered distributions, and $\mathcal{E}'(\Omega) = [C^{\infty}(\Omega)]',$ the space of distributions with compact support. 

Finally we define the formal transpose of an operator, which is also used in this work.
Given $L: C^{\infty}_{c}(\Omega) \longrightarrow C^{\infty}_{c}(\Omega)$ a continuous linear operator, if there exists $L':C^{\infty}_{c}(\Omega) \longrightarrow C^{\infty}_{c}(\Omega)$, a linear and continuous operator, such that 
$$
\int_{\Omega} L(\psi)(x) \phi(x) dx = \int_{\Omega} \psi(x) L'(\phi)(x) dx,
$$
i.e., $\langle L\psi, \phi \rangle = \langle \psi, L'\phi \rangle,$
for every $\psi, \phi \in C^{\infty}_{c}(\Omega),$ then $L'$ is called the formal transpose of $L$ and vice-versa.

\begin{theorem} 
\label{hahn-banach-frechet}
Let $(X,(p_{j})_{j \in \N})$ be a Fr\'echet space. If $M \subset X$ is a subspace such that $\overline{M} \neq X,$ then there exists a non-null $G \in X'$ that satisfies 
$
\langle G, x \rangle = 0, \; \forall \; x \in M.
$
\end{theorem}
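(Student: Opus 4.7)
The plan is to reduce this to the seminorm version of the Hahn--Banach extension theorem, which applies since a Fr\'echet space is locally convex. The strategy is to pick a point $x_0$ outside $\overline{M}$, define a continuous linear functional on the slightly larger subspace $\overline{M} \oplus \K x_0$ that vanishes on $M$ and is nonzero at $x_0$, and then extend it continuously to all of $X$.

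More precisely, first I would pick $x_0 \in X \setminus \overline{M}$, which exists by hypothesis. Replacing each $p_j$ by $\max(p_1,\ldots,p_j)$ if necessary, I may assume the family $(p_j)_{j\in\N}$ is nondecreasing without changing the induced topology. The sets $\{y \in X : p_j(y - x_0) < 1/n\}$, for $j,n \in \N$, then form a neighborhood basis of $x_0$, so since $x_0 \notin \overline{M}$ there must exist $j_0 \in \N$ and $\delta > 0$ with
$$
p_{j_0}(x_0 - m) \geq \delta \quad \text{for every } m \in \overline{M}.
$$

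Because $x_0 \notin \overline{M}$ the sum $Y := \overline{M} \oplus \K x_0$ is direct, so I can define $G_0(m + t x_0) := t$ on $Y$. This linear functional vanishes on $M$ and satisfies $G_0(x_0) = 1$. For $t \neq 0$, since $-m/t \in \overline{M}$,
$$
p_{j_0}(m + t x_0) = |t|\, p_{j_0}\!\bigl(x_0 - (-m/t)\bigr) \geq |t|\, \delta,
$$
hence $|G_0(y)| \leq \delta^{-1} p_{j_0}(y)$ for all $y \in Y$ (trivially so when $t = 0$). In particular $G_0$ is continuous on $Y$ with respect to the subspace topology.

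To conclude, I would apply the classical seminorm form of the Hahn--Banach theorem to extend $G_0$ to a linear functional $G : X \to \K$ satisfying the same estimate $|G(x)| \leq \delta^{-1} p_{j_0}(x)$ on all of $X$. This bound forces $G \in X'$, while by construction $G|_M = 0$ and $G(x_0) = 1 \neq 0$, so $G$ is not the zero functional. The main delicate point is the separation step: showing that a \emph{single} seminorm $p_{j_0}$ from the countable generating family separates $x_0$ from $\overline{M}$. This relies crucially on the countability of the defining family---so that one may assume monotonicity---and on the resulting explicit description of a neighborhood basis at $x_0$.
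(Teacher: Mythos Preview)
Your proof is correct and follows essentially the same route as the paper's: pick $x_0 \notin \overline{M}$, separate it from $M$ by a continuous seminorm, define a functional on the span $M \oplus \K x_0$ that vanishes on $M$ but not at $x_0$, and extend by Hahn--Banach. The only cosmetic difference is in the separation step: the paper reaches it via the standard Fr\'echet metric and a tail-truncation argument (yielding a finite sum $\sum_{j=1}^{n_0} p_j$), whereas you first pass to a monotone family and obtain a single $p_{j_0}$---these are equivalent.
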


We recall the Fourier Transform which is defined by
$$
(\mathcal{F}\psi) (\xi) \doteq \int_{\R^{n}} e^{-2\pi i x \xi} \psi(x) dx\;, \psi \in L^{1}(\R^{n}) \text{ and } \xi \in \R^n.
$$
 $\hat{\psi}$ is also used to denote the Fourier Transform of $\psi.$

\begin{definition}
Let $\Omega \subset \R^{n}$ a given interval, the Sobolev space $H^{1}(\Omega)$ is the following set   
\begin{equation*}
     \left\{u \in L^{2}(\Omega); \exists \; g_{\alpha} \in L^{2}(\Omega); \displaystyle \int_{\Omega} u \partial^{\alpha}\phi = (-1)^{|\alpha|} \int_{\Omega} g_{\alpha} \phi, \mbox{for each } |\alpha|= 1, \phi \in C^{\infty}(\Omega) \right\}.
\end{equation*}
Here, the functions $g_{\alpha}$ are denoted by $\partial^{\alpha}u$ and each $\partial^{\alpha}u$ is said to be the weak $\alpha-$derivative of $u.$ Moreover, the usual topology of $H^{1}(\Omega)$ is determined by the following norm
$
    \|u\|_{H^{1}(\Omega)} \doteq \sum_{0 \leq |\alpha| \leq 1}  \|\partial^{\alpha}u\|_{L^{2}(\Omega)}.
$

Given a natural number $m \geq 2,$ the Sobolev space $H^{m}(\Omega)$ is defined as 
$$
    H^{m}(\Omega) = \{u \in H^{m-1}(\Omega); \partial^{\alpha}u \in H^{m-1}(\Omega) \mbox{ for each } |\alpha| = 1 \}
$$
and its usual topology is defined by the norm 
$
    \|u\|_{H^{m}(\Omega)} \doteq \sum_{0 \leq |\alpha| \leq m} \|\partial^{\alpha}u\|_{L^{2}(\Omega)}.
$
\end{definition}

\begin{definition}
Let $m \in \N,$ $H^{m}_{0}(\Omega)$ is the closure of $C^{\infty}_{c}(\Omega)$ in $H^{m}(\Omega)$ with the induced topology.
\end{definition}

The following theorem, which can be seen in \cite{brezisaf}, gives an alternative way to describe the space $H^{m}(\R^{n})$ by using the Fourier transform.

\begin{theorem} \label{teosob}
For $m \in \N$ we have 
\begin{equation*}
    H^{m}(\R^{n}) = \big\{u \in \Sl; (1+|\xi|^{2})^{m/2}\hat{u} \in L^{2}(\R^{n}) \big\}.
\end{equation*}
Furthermore, the norm given by
$
    \|u\|_{m} \doteq \left\|(1+|\xi|^{2})^{m/2}\hat{u}\right\|_{L^{2}(\R^{n})}
$ is equivalent to $\|\cdot\|_{H^{m}(\R^{n})}.$
\end{theorem}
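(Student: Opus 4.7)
The strategy is to combine two standard tools: Plancherel's theorem, asserting that $\mathcal{F}:L^{2}(\R^{n})\to L^{2}(\R^{n})$ is an isometry, and the identity $\mathcal{F}(\partial^{\alpha}u)=(2\pi i\xi)^{\alpha}\hat{u}$, valid on $\Sl$ by duality with the Schwartz class. The entire theorem then reduces to a single polynomial comparison: I would first prove
\begin{equation*}
C_{1}\,(1+|\xi|^{2})^{m}\;\le\;\sum_{|\alpha|\le m}\bigl|(2\pi\xi)^{\alpha}\bigr|^{2}\;\le\;C_{2}\,(1+|\xi|^{2})^{m},\qquad \xi\in\R^{n},
\end{equation*}
for positive constants $C_{1},C_{2}$. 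The upper bound follows from the multinomial expansion of $(1+|\xi|^{2})^{m}$; the lower one holds because both sides are strictly positive continuous functions of the same order of growth at infinity and bounded away from zero on compact sets.

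For the inclusion $H^{m}(\R^{n})\subset\{u\in\Sl:(1+|\xi|^{2})^{m/2}\hat{u}\in L^{2}(\R^{n})\}$ I would take $u\in H^{m}(\R^{n})$ and observe that each weak derivative $\partial^{\alpha}u$ with $|\alpha|\le m$ lies in $L^{2}(\R^{n})$, so by Plancherel $(2\pi i\xi)^{\alpha}\hat{u}\in L^{2}(\R^{n})$; adding the squared norms and invoking the lower polynomial bound puts $(1+|\xi|^{2})^{m/2}\hat{u}$ in $L^{2}(\R^{n})$ with $\|u\|_{m}\le C\|u\|_{H^{m}(\R^{n})}$. For the reverse inclusion, if $u\in\Sl$ with $(1+|\xi|^{2})^{m/2}\hat{u}\in L^{2}(\R^{n})$, then the upper polynomial bound shows $(2\pi i\xi)^{\alpha}\hat{u}\in L^{2}(\R^{n})$ for every $|\alpha|\le m$, so $v_{\alpha}\doteq\mathcal{F}^{-1}\bigl((2\pi i\xi)^{\alpha}\hat{u}\bigr)\in L^{2}(\R^{n})$; in particular $u=v_{0}\in L^{2}(\R^{n})$. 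Testing against $\phi\in C^{\infty}_{c}(\R^{n})\subset\Sc$ and applying Parseval,
\begin{equation*}
\int_{\R^{n}}u\,\partial^{\alpha}\phi\,dx=\bigl\langle\hat{u},\overline{\widehat{\partial^{\alpha}\phi}}\bigr\rangle=\bigl\langle\hat{u},(-2\pi i\xi)^{\alpha}\overline{\hat{\phi}}\bigr\rangle=(-1)^{|\alpha|}\int_{\R^{n}}v_{\alpha}\,\phi\,dx,
\end{equation*}
so $v_{\alpha}$ is the weak $\alpha$-derivative of $u$; hence $u\in H^{m}(\R^{n})$, and the same polynomial comparison supplies the opposite norm inequality $\|u\|_{H^{m}(\R^{n})}\le C'\|u\|_{m}$.

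The main obstacle is the interpretation step: for an arbitrary $u\in\Sl$ the product $(1+|\xi|^{2})^{m/2}\hat{u}$ must be made precise, and the hypothesis ``$\in L^{2}(\R^{n})$'' implicitly requires that $\hat{u}$ be a regular tempered distribution identified with a measurable function. Once this regularity is built in, the subsequent weak-derivative identification must be carried out against the exact class of test functions used in the definition of $H^{m}(\R^{n})$; this is painless because $C^{\infty}_{c}(\R^{n})\subset\Sc$ and the Parseval computation above transfers verbatim.
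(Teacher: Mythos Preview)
The paper does not actually prove this theorem: it merely states the result and refers the reader to Brezis (\cite{brezisaf}). Your argument is precisely the standard proof one finds there---Plancherel together with the two-sided polynomial comparison $C_{1}(1+|\xi|^{2})^{m}\le\sum_{|\alpha|\le m}|(2\pi\xi)^{\alpha}|^{2}\le C_{2}(1+|\xi|^{2})^{m}$---so there is no discrepancy to report.

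Two small remarks. First, the ``obstacle'' you flag is not really one: since $(1+|\xi|^{2})^{m/2}$ is smooth of polynomial growth, multiplication by it is a well-defined continuous map $\Sl\to\Sl$, and the hypothesis ``$\in L^{2}$'' simply asserts that the resulting tempered distribution is represented by a square-integrable function. Second, the paper's Definition of $H^{1}(\Omega)$ writes the test functions as $\phi\in C^{\infty}(\Omega)$, which is almost certainly a misprint for $C^{\infty}_{c}(\Omega)$; your choice $\phi\in C^{\infty}_{c}(\R^{n})$ is the correct one and your Parseval verification goes through as written.
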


This result suggests a way to define Sobolev spaces for any $s \in \R.$ Given $s \in \R$ we define 
$$
H^{s}(\R^{n}) = \left\{u \in \Sl; (1+|\xi|^{2})^{s/2}\hat{u} \in L^{2}(\R^{n}) \right\}.
$$

\begin{definition}
Given an open set $\Omega \subset \R^{n}$ and $s \in \R$ we define the local Sobolev space of order $s$ on $\Omega$ as
$$
    H^{s}_{loc}(\Omega) =\big \{u \in \Dl(\Omega); \phi u \in H^{s}(\Omega), \; \forall \; \phi \in C^{\infty}_{c}(\Omega)\big \}.
$$
\end{definition}

For each $s\in \R$, $H^{s}_{loc}(\Omega)$ is a Fr\'echet space and its semi-norms are given by 
$$
p_{j} \doteq \|\phi_{j} u\|_{H^{s}(\R^{n})},
$$
where $\phi_{j} = 1$ in $\Omega_{j},$ $\phi_{j} \in C^{\infty}_{c}(\Omega_{j+1})$ and $\Omega_{j} \subset \Omega$ is a sequence of open sets that exhaust $\Omega.$ 

When $s=0$, $H^{0}_{loc}(I) = L^{2}_{loc}(I)$ which, as we will see later on, is used as a 'base space' for the Laplacian.


\bigskip
\begin{definition}[Semiglobal symbol of a pseudo-differential operator of order $m$]
Let  $a \in C^{\infty}(\Omega \times \R^{n})$ and $m \in \R$ be such that for every compact $K \subset \Omega$ and multi-indexes $\alpha, \beta$ there exists $C_{K,\alpha, \beta}>0$ with 
$\left|\partial_{x}^{\beta} \partial_{\xi}^{\beta} a(x,\xi)\right| \leq C_{K,\alpha, \beta} \left(1+|\xi|\right)^{m - |\beta|},$ for each $\xi \in \R^{m}, x \in K.$

The function $a$ is said to be a (Semiglobal) symbol of a pseudo-differential operator of order $m$ and the class of all (Semiglobal) symbols of order $m$ is denoted by $S^{m}(\Omega).$
\end{definition}

Now we present the definition of operator of order $m$ we found in \cite{henry}, which was as an inspiration to this work. By means of this definition was possible to build a link between the spectrum of $\Delta: H^{1}_{0}(I) \cap H^{2}(I) \subset L^{2}(I) \to L^{2}(I)$ and the spectrum of $\Delta$ defined on a localized Sobolev space.

\begin{definition}
Given $m \in \R,$ a linear operator 
$A: C^{\infty}_{c}(\Omega) \to C^{\infty}(\Omega)$
is said to be a operator of order $m$ if, for every $s \in \R,$ $A$ extends to a linear operator
\begin{equation*}
    A_{s}: H_{0}^{s+m}(\Omega) \subset H^{s+m}_{loc}(\Omega) \to H^{s}_{loc}(\Omega).
\end{equation*}
\end{definition}

The proof of the next theorem can be found in \cite{henry}
\begin{theorem}
If $p \in S^{m}(\Omega),$ then $p(x,D)$ is an operator of order $m.$ 
\end{theorem}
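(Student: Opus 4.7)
The plan is to reduce questions about the semiglobal symbol $p \in S^m(\Omega)$ to the classical Sobolev boundedness of pseudo-differential operators with globally defined symbols: for every $q \in S^m(\R^n)$ and every $s \in \R$, the operator $q(x,D)\colon H^{s+m}(\R^n) \to H^{s}(\R^n)$ is continuous (see, e.g., H\"ormander). This boundedness is the analytic engine; the remaining job is to pass from local estimates on $\Omega$ to global estimates on $\R^n$ and then glue the local pieces back together into a single element of $H^{s}_{loc}(\Omega)$.

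First I would fix $s \in \R$ and the cutoffs $\varphi_j \in C^\infty_c(\Omega_{j+1})$ with $\varphi_j \equiv 1$ on $\Omega_j$ used to generate the Fr\'echet topology of $H^{s}_{loc}(\Omega)$. For each $j$ I would introduce the symbol
\[
  q_j(x,\xi) := \varphi_j(x)\,p(x,\xi),
\]
extended by zero in $x$ outside $\mathrm{supp}\,\varphi_j$. Because $\mathrm{supp}\,\varphi_j$ is compact in $\Omega$, combining the semiglobal bounds on $p$ over this compact set with Leibniz's rule and the fact that all $x$-derivatives of $\varphi_j$ have the same compact support yields $q_j \in S^m(\R^n)$. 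Hence $q_j(x,D)\colon H^{s+m}(\R^n) \to H^{s}(\R^n)$ is continuous for every $j$.

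Next, for $u \in H^{s+m}_{0}(\Omega)$, viewed as an element of $H^{s+m}(\R^n)$ after extension by zero, I would pick approximants $u_n \in C^\infty_c(\Omega)$ with $u_n \to u$ in the $H^{s+m}(\R^n)$ norm. The pointwise identity $\varphi_j \cdot p(x,D)u_n = q_j(x,D)u_n$ on $\R^n$, together with the continuity of $q_j(x,D)$, produces an $H^{s}(\R^n)$-limit which I call $q_j(x,D)u$. A routine consistency check — whenever $\varphi_k \equiv 1$ on $\mathrm{supp}\,\varphi_j$ one has $q_j(x,D)u = \varphi_j\,q_k(x,D)u$, obvious for the smooth approximants and preserved under the limit — lets these local pieces be glued into a single distribution $A_s u \in \mathcal{D}'(\Omega)$ satisfying $\varphi_j A_s u = q_j(x,D)u \in H^{s}(\R^n)$ for every $j$, which is precisely the condition $A_s u \in H^{s}_{loc}(\Omega)$. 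Continuity of $A_s$ in the natural topologies then follows directly from its construction.

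The main technical point I expect to verify carefully is the first one: confirming, via Leibniz and the compact support of $\varphi_j$, that multiplication by $\varphi_j$ really promotes the semiglobal estimates of $p$ to the global class $S^m(\R^n)$ with uniform bounds in $x \in \R^n$. Once that is in place, the rest is density, a gluing argument, and the classical $H^{s+m} \to H^{s}$ boundedness of pseudo-differential operators on $\R^n$, which is used here as a black box rather than re-proved.
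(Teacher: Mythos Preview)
The paper does not supply its own proof of this theorem; it simply refers the reader to \cite{henry}. So there is no argument in the paper itself to compare your proposal against.

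On its own merits, your approach is the standard localization route and is sound. Multiplying by the cutoff $\varphi_j$ converts the semiglobal symbol $p$ into a genuine global symbol $q_j\in S^m(\R^n)$ (Leibniz plus compact support of $\varphi_j$ gives uniform bounds in $x$), and the classical $H^{s+m}(\R^n)\to H^{s}(\R^n)$ boundedness of $q_j(x,D)$ then applies. The identity $\varphi_j\,p(x,D)u_n=q_j(x,D)u_n$ for $u_n\in C^\infty_c(\Omega)$ is immediate from the definition of $q_j$, and the compatibility $q_j(x,D)u=\varphi_j\,q_k(x,D)u$ whenever $\varphi_k\equiv 1$ on $\mathrm{supp}\,\varphi_j$ (obvious on $C^\infty_c$, preserved under the $H^s$ limit using continuity of multiplication by $\varphi_j$ on $H^s(\R^n)$) is exactly what is needed to glue the local pieces into a single distribution $A_s u\in H^s_{loc}(\Omega)$ that restricts to $p(x,D)u$ on $C^\infty_c(\Omega)$. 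Nothing is missing.
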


It is well known that many of differential operators which are studied in PDE are not continuous and, in some cases, not even closed, so the concept of closed and closable operators are fundamental. In the last section of this paper the operators are just closable so, at this point, we present the definitions and some basic results about closed, closable operators and its spectrum.

\begin{definition} 
Consider a Fr\'echet space $X$ and a linear operator 
$A: D(A) \subset X \to X.$
The graph of $A$ is the set  
\begin{equation*}
    G(A) = \{(u, Au): u \in D(A)\} \subset X \times X.
\end{equation*}
The operator $A$ is said to be a closed operator if its graph $G(A) \subset X \times X$ is a closed set.
\end{definition}

\begin{definition} 
Consider a Fr\'echet space $X$ and a linear operator 
$A: D(A) \subset X \to X.$
We say that $A$ is a closable, if there exists a closed linear operator 
$\overline{A}: D(\overline{A}) \subset X \to X,$
with $D(A) \subset D(\overline{A})$ and $Au = \overline{A}u$, for each $u \in D(A)$.
\end{definition}

\begin{definition}\label{def-espectro} 
Let $X$ be a complex Fr\'echet space and $A:D(A)\subset X\longrightarrow X$ be a linear operator.
The resolvent set of $A$, denoted by $\rho (A)$, is the set of all $\lambda \in {\mathbb C}$ such that:
\begin{itemize}
\item[(a)] The operator $\lambda - A :D(A)\subset X\longrightarrow X$ is injective.
\item[(b)] The range of  $\lambda - A :D(A)\subset X\longrightarrow X$ is dense in $X$.
\item[(c)] The inverse $(\lambda - A)^{-1} :R(\lambda - A)\subset X\longrightarrow X$ is continuous.
\end{itemize}

If $\lambda \in \rho(A)$, the operator $(\lambda - A)^{-1} :R(\lambda - A)\subset X\longrightarrow X$ is called the resolvent of $A$ on $\lambda$.

Finally, we define the spectrum of $A$, indicated by $\sigma (A)$, as $\sigma(A)={\Comp}\setminus \rho(A).$

Next we define, for a closed operator $A$, respectively the point spectrum, residual spectrum and continuous spectrum as follows: 
\begin{itemize}
    \item[(a)] \textbf{Point Spectrum:}
    
    $\sigma_{p}(A)$ $\doteq \big \{\lambda \in \Comp; \lambda - A \; \mbox{is not injective} \big \},$
    \item[(b)] \textbf{Residual Spectrum:}
    
    $\sigma_{r}(A) \doteq \big \{\lambda \in \Comp; \lambda - A \; \mbox{is injective with} \; \overline{R(\lambda - A)} \neq X \big\}, $ 
    \item[(c)] \textbf{Continuous Spectrum:}
    
    $\sigma_{c}(A) \doteq \big \{\lambda \in \Comp; \lambda - A  \mbox{ is injective, } \overline{R(\lambda - A)} = X \mbox{ but }$ \\ $(\lambda - A)^{-1}: R(\lambda - A) \to X \mbox{ is not continuous} \big \}.$
\end{itemize}
Note that $\sigma(A) = \sigma_{p}(A) \cup \sigma_{r}(A) \cup \sigma_{r}(A).$
\end{definition}

\begin{lemma} Let $X$ be a Fr\'echet space. If $A:D(A)\subset X \longrightarrow X$ is a closed operator, then
$
\rho(A)=\big \{ \lambda \in \Comp: \lambda-A:D(A)\longrightarrow X \text{ is bijective}\}
$
\end{lemma}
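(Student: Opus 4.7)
The statement is a pair of inclusions. The forward inclusion $\rho(A) \subseteq \{\lambda : \lambda - A \text{ is bijective}\}$ amounts to upgrading the dense range in condition (b) of Definition \ref{def-espectro} to genuine surjectivity; the reverse inclusion needs all three defining properties of the resolvent set extracted from bijectivity alone.

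For the forward direction, I fix $\lambda \in \rho(A)$; injectivity is already given, so I only need surjectivity. The plan is to take an arbitrary $y \in X$, approximate it by $y_n \in R(\lambda - A)$ with $y_n \to y$, set $x_n = (\lambda - A)^{-1} y_n$, and produce a limit $x$ with $(\lambda-A)x = y$. Since $(\lambda - A)^{-1}$ is a continuous linear map between topological vector spaces, it preserves Cauchy sequences, so $x_n$ is Cauchy; completeness of the Fr\'echet space $X$ then yields $x_n \to x$ in $X$. Closedness of $A$ (hence of $\lambda - A$) applied to the converging pair $(x_n, y_n) = (x_n, (\lambda-A)x_n) \to (x, y)$ in $X \times X$ places $x$ in $D(A)$ and identifies $(\lambda - A) x$ with $y$.

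For the reverse direction, assume $\lambda - A : D(A) \to X$ is bijective. Conditions (a) and (b) of Definition \ref{def-espectro} follow at once, since surjectivity is stronger than dense range. Only continuity of $(\lambda - A)^{-1} : X \to X$ needs to be established. Here I observe that $\lambda - A$ is closed (because $A$ is, and adding a scalar multiple of the identity preserves closedness), and consequently its inverse $(\lambda - A)^{-1}$ is a closed linear operator defined on all of the Fr\'echet space $X$ with values in $X$.

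The essential ingredient — and the only nontrivial one — is then the closed graph theorem for Fr\'echet spaces, which grants continuity of $(\lambda - A)^{-1}$. That is where the argument genuinely uses the Fr\'echet (rather than generic topological vector space) hypothesis, via the fact that Fr\'echet spaces are Baire, hence barreled. Once continuity is secured, condition (c) of Definition \ref{def-espectro} holds and $\lambda \in \rho(A)$. Everything else is the routine interplay of closedness, linearity, and completeness, so the closed graph step is the sole obstacle worth flagging.
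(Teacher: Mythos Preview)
Your proof is correct and follows essentially the same route as the paper's (commented-out) proof: for the forward inclusion you upgrade dense range to surjectivity via a Cauchy-sequence argument using continuity of $(\lambda-A)^{-1}$, completeness of $X$, and closedness of $\lambda-A$; for the reverse inclusion you invoke the Closed Graph Theorem for Fr\'echet spaces after noting that the inverse of a closed bijection is closed. The only cosmetic difference is the order in which the two inclusions are treated.
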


The next result allows us to study the spectrum of a closable operator $A$ by means of the spectrum of its closure $\overline{A}$. 

\begin{theorem} Consider $X$ a Fr\'echet space. If $A:D(A)\subset X \longrightarrow X$ is closable and $\overline{A}:D(\overline{A})\subset X \longrightarrow X$ is its closure, then
$
\sigma(A)=\sigma(\overline{A}).
$ \label{fechadofrechet}
\end{theorem}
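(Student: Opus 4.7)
The plan is to establish the set equality $\rho(A)=\rho(\overline{A})$, which immediately gives $\sigma(A)=\sigma(\overline{A})$. I will use the previous lemma applied to the closed operator $\overline{A}$, which reduces the condition $\lambda\in\rho(\overline{A})$ to the single requirement that $\lambda-\overline{A}:D(\overline{A})\to X$ is a bijection. I will also repeatedly exploit the identity $G(\lambda-\overline{A})=\overline{G(\lambda-A)}$, which follows from $G(\overline{A})=\overline{G(A)}$ by applying the homeomorphism $(u,v)\mapsto(u,\lambda u-v)$ of $X\times X$.

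For the inclusion $\rho(A)\subset\rho(\overline{A})$, I would take $\lambda\in\rho(A)$ and show $\lambda-\overline{A}$ is bijective. For injectivity, suppose $(\lambda-\overline{A})u=0$; then $(u,0)\in\overline{G(\lambda-A)}$, so one can pick $u_n\in D(A)$ with $u_n\to u$ and $f_n:=(\lambda-A)u_n\to 0$. Since $(\lambda-A)^{-1}$ is continuous on $R(\lambda-A)$ and a continuous linear map sends Cauchy sequences to Cauchy sequences, $u_n=(\lambda-A)^{-1}f_n\to 0$, forcing $u=0$. For surjectivity, given $f\in X$, I would use density of $R(\lambda-A)$ to approximate $f$ by $f_n=(\lambda-A)u_n$; then $(f_n)$ is Cauchy in $X$, continuity of $(\lambda-A)^{-1}$ forces $(u_n)$ to be Cauchy, and completeness of the Fr\'echet space $X$ produces a limit $u\in X$. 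The limit pair $(u,f)$ lies in $\overline{G(\lambda-A)}=G(\lambda-\overline{A})$, so $u\in D(\overline{A})$ and $(\lambda-\overline{A})u=f$.

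For the reverse inclusion $\rho(\overline{A})\subset\rho(A)$, fix $\lambda\in\rho(\overline{A})$. Injectivity of $\lambda-A$ is automatic because $\lambda-A$ is a restriction of the bijective $\lambda-\overline{A}$. Density of $R(\lambda-A)$ follows because every $u\in D(\overline{A})$ is the limit of a sequence $u_n\in D(A)$ with $Au_n\to\overline{A}u$, whence $(\lambda-A)u_n\to(\lambda-\overline{A})u$; combined with surjectivity of $\lambda-\overline{A}$, this shows $R(\lambda-A)$ is dense in $X$. Finally, $(\lambda-A)^{-1}$ is the restriction of the continuous operator $(\lambda-\overline{A})^{-1}:X\to X$ (continuous by the previous lemma applied to the closed operator $\overline{A}$) to the subspace $R(\lambda-A)$, hence continuous in the induced topology.

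The main obstacle is the asymmetry in the definition of $\rho$: condition (c) only asserts continuity of $(\lambda-A)^{-1}$ on its restricted range $R(\lambda-A)$ and not on all of $X$. Consequently, the surjectivity argument in the first inclusion is the delicate step: one must extract a Cauchy sequence $(u_n)$ using only continuity on this a priori non-closed subspace, then invoke completeness of $X$ to extract a genuine limit, and finally use the graph identity $G(\lambda-\overline{A})=\overline{G(\lambda-A)}$ to recognize that limit as an element of $D(\overline{A})$ with the correct image. Once this extension step is performed, the remaining verifications reduce to bookkeeping with closures of graphs.
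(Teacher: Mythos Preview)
Your proposal is correct and follows essentially the same route as the paper: both directions are proved by the same sequence arguments (injectivity via $u_n=(\lambda-A)^{-1}f_n\to 0$, surjectivity via a Cauchy-sequence extraction and passage to the closed graph), and the reverse inclusion by restriction of $(\lambda-\overline{A})^{-1}$. The only cosmetic difference is that you package the limit identification through the explicit graph identity $G(\lambda-\overline{A})=\overline{G(\lambda-A)}$, whereas the paper works directly with sequences and the definition of $D(\overline{A})$.
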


\begin{proof} To show that $\sigma(\overline{A})=\sigma(A)$ is the same as to prove that  $\rho(\overline{A})=\rho(A)$. 

So, fix $\lambda \in \rho(\overline{A})$, since $D(A) \subset D(\overline{A})$ we have that $\lambda - \overline{A}\mid_{D(A)} = \lambda - A$ consequently $\lambda - A$ is injective.

If $f \in X = R(\lambda - \overline{A})$ then there exists $u \in D(\overline{A})$ with $(\lambda - \overline{A}) u = f.$
By the definition of domain and range of $\overline{A}$ there exists a sequence $(u_{j})_{j \in \N} \in D(A)$ with $u_{j} \to u$ and $(\lambda - A)u_{j} \to f,$ hence $f \in \overline{R(\lambda - A)}.$

It remains to prove that $(\lambda - A)^{-1} : R((\lambda - A))\subset X \to X$ is a continuous operator. To do so, consider a sequence $(f_{j})_{j \in \N} \subset R(\lambda - A)$ such that 
$f_{j} \to f \in  R(\lambda - A),$ then there is $u \in D(A)$ with $f=(\lambda - A)u.$ Now we just have to show that $u_{j} \to u.$

Note that $f_{j} = (\lambda - A)u_{j} = (\lambda - \overline{A})u_{j},$ $u_{j} \in D(A) \subset D(\overline{A}),$ or equivalently, $u_{j} = (\lambda - \overline{A})^{-1}f_{j}$ and since $(\lambda - \overline{A})^{-1}$ is continuous we have
$$u = (\lambda - \overline{A})^{-1}f = \lim_{j \to \infty} (\lambda - \overline{A})^{-1}f_{j} = \lim_{j \to \infty}u_{j}, \text{ hence } \rho(\overline{A}) \subset \rho(A).$$

Conversely, consider $\lambda \in \rho(A)$ so $\lambda - A$ is injective, $X = \overline{R(\lambda - A)}$ and 
$(\lambda - A)^{-1}: R(\lambda - A) \subset X \to X$
is continuous. 

Let us prove that $\lambda - \overline{A}$ is bijective. Indeed, 
if $u \in D(\overline{A})$ with $(\lambda - \overline{A})u =0$ then there is a sequence $(u_{j})_{j \in \N} \subset D(A)$ with $u_{j} \to u$ and $f_{j} \doteq (\lambda - A)u_{j} \to 0.$

Note that $0 \in R(\lambda - A),$ therefore from the continuity of $(\lambda - A)^{-1}$ we have  
$$u_{j} = (\lambda - A)^{-1}f_{j} \to (\lambda - A)^{-1} 0 = 0,$$
in other words, $u=0$ and it follows that $\lambda - \overline{A}$ is injective.

Now, let $f \in X = \overline{R(\lambda - A)},$ so there is  $(u_{j})_{j \in \N} \subset D(A)$ with $f_{j} = (\lambda - A)u_{j}$ and $f_{j} \to f.$

Hence $(f_{j})_{j}$ is a Cauchy sequence, i.e., 
$f_{j}-f_{l} = (\lambda - A)(u_{j}-u_{l}) \to 0$
and, since $(\lambda - A)^{-1}$ is continuous,
$u_{j}-u_{l} = (\lambda - A)^{-1}(f_{j}-f_{l}) \to 0.$ 
We conclude that $(u_{j})_{j \in \N}$ is a Cauchy sequence, so there is $u \in X$ such that $u_{j} \to u.$
Therefore $(u,f) \in \overline{G(\lambda - \overline{A})} = G(\lambda - \overline{A}),$ then $u \in D(\overline{A})$ and $f = (\lambda - \overline{A})u \in R(\lambda - \overline{A}).$
\end{proof}

\section{Main results}

\subsection{Spectrum of differential operators with elliptic dual}

Here we present the main results of this paper which were achieved through the study of the spectrum of differential operators with constant coefficients with elliptic dual.

The first result is more general, and works for differential operators with constant coefficients with hypoelliptic dual, but to give a more precise description we need to restrict a bit more the class of operators to those with elliptic dual. At the end we apply the result to the Laplacian. 

Consider a symbol $a \in S^{m}(\R)$ given by $a(\xi)=\sum_{k=0}^{m}a_k\xi^{k}$, $m \in \N,$ $a_k \in {\mathbb C}$, and the differential operator $a(D)=\sum_{k=0}^{m}(2\pi i)^{-k}a_k\frac{d^k}{dx^k}$, determined by it, defined on the following scales
$$
a(D): H^{s+m}_{0}(I) \subset H^{s}_{loc}(I) \to H^{s}_{loc}(I), \; s \in \R.
$$
Our goal is to compare its spectrum with that from its dual
$$
a(D)\mbox{*}: D(a(D)\mbox{*}) \subset H^{-s}_{c}(I) \to H^{-s}_{c}(I)
$$
where
$$
D(a(D)\mbox{*}) \doteq \left\{g \in H^{-s}_{c}(I); g \circ a(D): H^{s+m}_{loc}(I) \subset H^{s}_{loc}(I) \to \Comp \; \mbox{ is continuous} \right\}
$$
and it satisfies the relation
$$
\left\langle u, a(D)\mbox{*}\psi \right\rangle = \left\langle a(D)u, \psi \right\rangle = \left\langle \displaystyle \sum_{j=0}^{m}(2\pi i)^{-j} a_{j} \frac{d^{j}u}{dx^{j}}, \psi \right\rangle = \left\langle u, \sum_{j=0}^{m} (-2\pi i)^{-j} a_{\alpha} \frac{d^{j}\psi}{dx^{j}} \right\rangle
$$
for $u \in H^{s+m}_{loc}(I)$ and $\psi \in C^{\infty}_{c}(I).$

Before stating the next theorem we need some definitions, Theorem $6.36$ from \cite[pg - 216]{follandedp} and the following theorem.

\begin{theorem} \label{multi-sobolew}
For each $s \in \R$ and $\phi \in \Sc,$ the map $M_{\phi}: H^{s}(\R^{n}) \longrightarrow H^{s}(\R^{n})$, given by
$M_{\phi}(u) = \phi u,$ is linear and continuous.
\end{theorem}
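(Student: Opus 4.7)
The plan is to reduce the statement to a convolution estimate on the Fourier side using the characterization $\|u\|_{H^s(\R^n)} = \|\langle\xi\rangle^s\hat u\|_{L^2(\R^n)}$ (with $\langle\xi\rangle = (1+|\xi|^2)^{1/2}$) given by Theorem \ref{teosob} and its extension to real $s$. Linearity of $M_\phi$ is immediate, so everything reduces to the continuity bound $\|\phi u\|_{H^s(\R^n)} \leq C_\phi \|u\|_{H^s(\R^n)}$ for $u \in \Sc$, and then to extend by density.

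First I would observe that $\phi u \in \Sc \subset H^s(\R^n)$ for $u \in \Sc$, and that the Fourier transform turns the product into a convolution:
\begin{equation*}
\widehat{\phi u}(\xi) = (\hat\phi * \hat u)(\xi) = \int_{\R^n} \hat\phi(\xi-\eta)\hat u(\eta)\,d\eta.
\end{equation*}
Consequently,
\begin{equation*}
\langle\xi\rangle^s \bigl|\widehat{\phi u}(\xi)\bigr| \leq \int_{\R^n} \langle\xi\rangle^s \bigl|\hat\phi(\xi-\eta)\bigr|\,\bigl|\hat u(\eta)\bigr|\,d\eta.
\end{equation*}

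Next I would invoke Peetre's inequality, namely the elementary estimate
\begin{equation*}
\langle\xi\rangle^s \leq 2^{|s|/2}\langle\xi-\eta\rangle^{|s|}\langle\eta\rangle^s, \qquad \xi,\eta \in \R^n,\ s \in \R,
\end{equation*}
which holds for every real $s$ and in particular handles the $s<0$ case that would otherwise be delicate. Substituting, one gets
\begin{equation*}
\langle\xi\rangle^s\bigl|\widehat{\phi u}(\xi)\bigr| \leq 2^{|s|/2}(g * h)(\xi),
\end{equation*}
where $g(\zeta) \doteq \langle\zeta\rangle^{|s|}|\hat\phi(\zeta)|$ and $h(\eta) \doteq \langle\eta\rangle^s|\hat u(\eta)|$. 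By Young's convolution inequality,
\begin{equation*}
\|\phi u\|_{H^s(\R^n)} = \bigl\|\langle\xi\rangle^s \widehat{\phi u}\bigr\|_{L^2} \leq 2^{|s|/2}\|g\|_{L^1}\|h\|_{L^2} = 2^{|s|/2}\|g\|_{L^1}\|u\|_{H^s(\R^n)}.
\end{equation*}

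To finish I would note that, since $\phi \in \Sc$, its Fourier transform $\hat\phi$ is again in $\Sc$, so for any fixed $s$ the weight $\langle\zeta\rangle^{|s|}|\hat\phi(\zeta)|$ decays faster than any polynomial and is therefore integrable; thus $\|g\|_{L^1} < \infty$, giving a finite constant $C_\phi = 2^{|s|/2}\|g\|_{L^1}$ that depends only on $\phi$ and $s$. Since $\Sc$ is dense in $H^s(\R^n)$ for every real $s$, this inequality extends by continuity to all of $H^s(\R^n)$, which yields simultaneously that $M_\phi$ maps $H^s(\R^n)$ into itself and that it is continuous. The only mildly subtle point is the justification of Peetre's inequality uniformly in $s \in \R$ and the verification of the $L^1$-bound on $g$, but both are routine given $\hat\phi \in \Sc$.
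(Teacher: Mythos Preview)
Your proof is correct and follows the standard route: pass to the Fourier side, use Peetre's inequality to split the weight, and apply Young's convolution inequality together with the rapid decay of $\hat\phi$. The paper itself does not prove this theorem; it is stated in the preliminaries as a known result (in the vicinity of a reference to Folland's text), so there is no paper proof to compare against. Your argument is exactly the kind of justification one would expect for such a citation.
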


\begin{definition}
Given $\Omega \subset \R^{n}$ an open set and $a(D): {\mathcal D}'(\Omega) \to {\mathcal D}'(\Omega)$ a differential operator with constant coefficients, we say that $a(D)$ is hypoelliptic if, for any $u \in {\mathcal D}'(\Omega),$ we have 
$$
\ssupp [a(D)u] = \ssupp u.
$$
\end{definition}

\begin{definition} \label{def-elipt}
Given $\Omega \subset \R^{n}$ an open set and $a(x,D): \Dl(\Omega) \to \Dl(\Omega)$ a differential operator of order $m,$ we say that $a(x,D)$ is elliptic if, for any compact set $K  \subset \Omega,$ there exists positive constants $c_{K}, C_{K}$ such that 
$$
|a(x,\xi)| \geq c_{K}|\xi|^{m} \mbox{ for any } x \in K \mbox{ and } |\xi| \geq C_{K}. 
$$ 
\end{definition}

\begin{theorem}[H\"ormander] \label{hormander}
Let $a(\xi) = \sum_{|\alpha| \leq m} a_{\alpha}\xi^{\alpha}$ be a symbol of a differential operator, $a(D),$ of order $m>0.$ 
The following statements are equivalent:
\begin{enumerate}
    \item If $|\zeta| \to \infty$ for $\mathcal{Z}(a),$ then $|\Im \zeta| \to \infty;$
    \item If $|\xi| \to \infty$ in $\R^{n},$ then $d_{P}(\xi) \to \infty;$
    \item There exists $\delta, C,R>0$ such that $d_{P}(\xi) \geq C|\xi|^{\delta}$, if $|\xi| > R$ in $\R^{n};$
    \item There exists $\delta, C,R>0$ such that $|a^{(\alpha)}(\xi)| \leq C |\xi|^{-\delta |\alpha|}|a(\xi)|,$ for all $\alpha$ and $\xi \in \R^{n}$ with $|\xi| > R;$
    \item There exist $\delta>0$ such that if $f \in H^{s}_{loc}(\Omega),$ where $\Omega \subset \R^{n}$ is an open set, then every solution $u$, of $a(D)u = f$, belongs to $H^{s+\delta m}_{loc}(\Omega);$
    \item $a(D)$ is hypoelliptic.
\end{enumerate}
\end{theorem}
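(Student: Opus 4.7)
The plan is to establish the theorem through the cyclic chain of implications $(1)\Leftrightarrow (2)\Rightarrow (3)\Rightarrow (4)\Rightarrow (5)\Rightarrow (6)\Rightarrow (1)$. The equivalence $(1) \Leftrightarrow (2)$ is a direct unpacking of the definition $d_P(\xi) = \inf\{|\xi-\zeta|: \zeta \in \mathcal{Z}(a)\}$. If $(2)$ fails, there are $\xi_k \in \R^n$ with $|\xi_k| \to \infty$ and $d_P(\xi_k)$ bounded; picking $\zeta_k \in \mathcal{Z}(a)$ near $\xi_k$ gives $|\zeta_k| \to \infty$ while $|\im \zeta_k|$ stays bounded, negating $(1)$, and the converse is analogous.

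The implication $(2) \Rightarrow (3)$ is the algebraic heart of the argument and the first main obstacle. Because $\mathcal{Z}(a)$ is an algebraic subset of $\Comp^n \simeq \R^{2n}$, the function $\xi \mapsto d_P(\xi)$ is semi-algebraic, so by the Tarski--Seidenberg theorem $t \mapsto \inf\{d_P(\xi): |\xi|\geq t\}$ is semi-algebraic in $t$. A semi-algebraic function of one real variable that tends to infinity must eventually dominate a positive rational power of $t$, which produces $\delta$. For $(3) \Rightarrow (4)$ I would use Cauchy estimates on a complex ball where $a$ is zero-free: since $a$ has no zero in $B(\xi, d_P(\xi)) \subset \Comp^n$, a Harnack-type bound for $\log|a|$ yields $\max_{|w|=d_P(\xi)/2}|a(\xi+w)| \leq C(1+|\xi|)^N |a(\xi)|$, and Cauchy's formula then gives $|a^{(\alpha)}(\xi)| \leq C'|a(\xi)| d_P(\xi)^{-|\alpha|} (1+|\xi|)^N$, which combined with $(3)$ yields the desired power $|\xi|^{-\delta|\alpha|}$ after a mild adjustment of $\delta$.

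The step $(4) \Rightarrow (5)$ is essentially the content of Theorem 6.36 in \cite{follandedp}. For $u$ with $a(D)u = f \in H^s_{loc}$, cut off by $\chi \in C^\infty_c$ and use the polynomial Leibniz rule
\begin{equation*}
a(D)(\chi u) = \chi\, a(D) u + \sum_{|\alpha|\geq 1} \tfrac{1}{\alpha!}(\partial^\alpha \chi)\, a^{(\alpha)}(D) u,
\end{equation*}
inverting on the Fourier side via $\widehat{\chi u}(\xi) = \widehat{a(D)(\chi u)}(\xi)/a(\xi)$; the bound $|a^{(\alpha)}(\xi)/a(\xi)| \leq C|\xi|^{-\delta|\alpha|}$ from $(4)$, together with Theorem \ref{teosob}, yields a gain of $\delta m$ Sobolev derivatives after bootstrapping. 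The implication $(5) \Rightarrow (6)$ is immediate: where $a(D)u$ is smooth, iteration of $(5)$ places $u$ in every $H^t_{loc}$, hence in $C^\infty$ by Sobolev embedding, so $\ssupp u \subseteq \ssupp a(D) u$; the reverse inclusion is automatic for a differential operator.

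Finally, $(6) \Rightarrow (1)$, the second main obstacle, is by contrapositive. Assume $(1)$ fails and take $\zeta_k \in \mathcal{Z}(a)$ with $|\zeta_k| \to \infty$ and $|\im \zeta_k| \leq M$; the exponentials $e_k(x) = e^{2\pi i x\cdot \zeta_k}$ are null solutions of $a(D)$ with $|e_k(x)| \leq e^{2\pi M|x|}$, uniformly bounded on compact sets. One constructs $u = \sum_k c_k e_k$ with coefficients $c_k$ chosen so that the series converges in $\Dl(\R^n)$ against any fixed $\phi \in C^\infty_c$ yet so that some derivative $\partial^\beta u$ diverges in $L^\infty_{loc}$, making $u$ non-smooth; then $a(D)u = 0$ is smooth while $u$ is not, contradicting hypoellipticity. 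Calibrating $c_k$ --- fast enough decay for distributional convergence, slow enough that the rapid growth of $|\re \zeta_k|$ still manufactures a singularity through high-order derivatives --- is the delicate point of this construction.
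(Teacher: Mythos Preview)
The paper does not prove this theorem: it is stated as a classical result of H\"ormander, with the reference to Theorem~6.36 in \cite{follandedp} given in the sentence immediately preceding its statement. So there is no ``paper's own proof'' to compare against; your proposal is a sketch of the standard argument (essentially the one in Folland and in H\"ormander's treatise), and the overall cyclic scheme $(1)\Leftrightarrow(2)\Rightarrow(3)\Rightarrow(4)\Rightarrow(5)\Rightarrow(6)\Rightarrow(1)$ is the right architecture.

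One point in your sketch is garbled. In $(3)\Rightarrow(4)$ you invoke a ``Harnack-type bound for $\log|a|$'' to obtain $\max_{|w|=d_P(\xi)/2}|a(\xi+w)|\leq C(1+|\xi|)^{N}|a(\xi)|$, and then feed this into Cauchy's estimate. The extra factor $(1+|\xi|)^{N}$ is spurious and, if retained, forces an unnecessary loss when you combine with $(3)$. The clean argument is purely algebraic: write $a(\xi+\eta)=\sum_{\alpha}\frac{1}{\alpha!}a^{(\alpha)}(\xi)\eta^{\alpha}$, substitute $\eta=d_{P}(\xi)\zeta$, and observe that the resulting polynomial in $\zeta$ has degree $\leq m$, value $a(\xi)$ at $\zeta=0$, and no zeros in $|\zeta|<1$. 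A compactness argument on the (finite-dimensional) space of such normalized polynomials gives $|a^{(\alpha)}(\xi)|\,d_{P}(\xi)^{|\alpha|}\leq C_{m}\,|a(\xi)|$ directly, with no polynomial-growth factor. Combining with $(3)$ then yields $(4)$ with the same $\delta$, no adjustment needed. The remaining steps of your outline are correct as stated.
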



The domain of $a(D)\mbox{*}$ is a subset of $[H^{s}_{loc}(I)]',$ so first we present a theorem which gives a characterization for $[H^{s}_{loc}(I)]'$ and then, as one of the results of our work, we localize the domain $D[a(D)\mbox{*}].$

\begin{theorem}
For each $s \in \R,$ it holds that $[H^{s}_{loc}(I)]' = H^{-s}_{c}(I)$ and $[H^{-s}_{c}(I)]' = H^{s}_{loc}(I),$ where $[H^{s}_{loc}(I)]'$ indicates the dual space of $H^{s}_{loc}(I)$, $[H^{-s}_{c}(I)]'$ the dual space of $H^{-s}_{c}(I)$ and the equalities are in the sense that there exists a $T: H^{-s}_{c}(I) \rightarrow [H^{s}_{loc}(I)]'$ linear continuous bijection.
\end{theorem}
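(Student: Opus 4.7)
My plan is to construct the map $T$ explicitly using cutoffs from the sequence $(\varphi_j)_{j\in\N}$, verify injectivity by testing against $C^{\infty}_c(I)$, obtain surjectivity by a Hahn--Banach-style extension trick that turns an abstract continuity estimate into an honest compactly supported distribution, and then deduce the second identification from the biduality principle for weak-* duals.

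Given $g \in H^{-s}_c(I) \subset H^{-s}(\R)$, compactness of $\supp g$ inside $I$ allows me to pick $j$ large enough that $\varphi_j \equiv 1$ on a neighborhood of $\supp g$; I then define
$$
T(g)(f) := \langle g,\, \varphi_j f\rangle_{H^{-s}(\R),\,H^s(\R)},\quad f \in H^s_{loc}(I).
$$
This is well-defined because $\varphi_j f \in H^s(\R)$ by Theorem~\ref{multi-sobolew}, and independent of the cutoff (two such $\varphi_j,\varphi_k$ differ by a function vanishing near $\supp g$). The bound $|T(g)(f)| \le \|g\|_{H^{-s}(\R)}\,p_j^{(s)}(f)$ places $T(g)$ in $[H^s_{loc}(I)]'$, linearity is clear, and continuity into the weak-* topology on $[H^s_{loc}(I)]'$ is immediate from the pointwise character of that topology. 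Injectivity of $T$ comes from testing on $f\in C^{\infty}_c(I)$: for $j$ large $\varphi_j f = f$, so $T(g) = 0$ yields $\langle g, f\rangle = 0$ for all $f \in C^{\infty}_c(I)$, hence $g = 0$.

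For surjectivity, let $\Lambda \in [H^s_{loc}(I)]'$. By continuity (and the fact that, thanks to Theorem~\ref{multi-sobolew}, one may arrange the seminorms to be monotone in $j$) there are $C>0$ and $j\in\N$ with $|\Lambda(f)| \le C\|\varphi_j f\|_{H^s(\R)}$. Setting $\psi := \varphi_{j+1}$, the nesting hypothesis gives $\psi \equiv 1$ on $\supp\varphi_j$ and $\supp\psi$ compact in $I$. I define $L: H^s(\R) \to \Comp$ by $L(u) := \Lambda(\psi u|_I)$; this makes sense since $\psi u \in H^s_{loc}(I)$, is continuous by Theorem~\ref{multi-sobolew}, hence $L \in [H^s(\R)]' = H^{-s}(\R)$, and $\supp L \subset \supp\psi$ puts $L$ in $H^{-s}_c(I)$. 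The equality $T(L)=\Lambda$ follows from $\varphi_j(1-\psi) \equiv 0$: the a priori bound forces $\Lambda((1-\psi)f)=0$, so $\Lambda(f) = \Lambda(\psi f)$, and then choosing $k$ with $\varphi_k \equiv 1$ on $\supp\psi$ gives $T(L)(f) = L(\varphi_k f) = \Lambda(\psi\varphi_k f) = \Lambda(\psi f) = \Lambda(f)$.

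The second identification is the biduality statement for the weak-* dual of a locally convex space. Every continuous $\Phi: H^{-s}_c(I) \to \Comp$ satisfies $|\Phi(g)| \le C\max_i|\langle g, f_i\rangle|$ for finitely many $f_i \in H^s_{loc}(I)$, so it vanishes on the common kernel of the $\langle\cdot,f_i\rangle$ and hence factors through the finite-dimensional quotient, forcing $\Phi(g) = \langle g, f\rangle$ for $f = \sum c_i f_i \in H^s_{loc}(I)$. The bijection is realized by $S: H^s_{loc}(I) \to [H^{-s}_c(I)]'$, $S(f)(g) := T(g)(f)$, which is linear, continuous, injective (testing against $C^{\infty}_c(I) \subset H^{-s}_c(I)$ forces $f = 0$ as a distribution, hence in $H^s_{loc}(I)$), and surjective by the biduality argument. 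The hard point, as I see it, is the surjectivity of $T$: the coordination of the cutoffs $\varphi_j,\varphi_k,\psi$ needed to pass from the abstract continuity estimate for $\Lambda$ to a concrete compactly supported distribution representing it.
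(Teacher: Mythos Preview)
Your argument is correct and follows essentially the same route as the paper's (commented-out) proof: the map $T$ is defined via cutoffs from the exhausting family $(\varphi_j)$, and surjectivity is obtained by turning the continuity estimate $|\Lambda(f)|\le C\,p_j^{(s)}(f)$ into a compactly supported element of $H^{-s}(\R)$. Your surjectivity step is in fact slightly more streamlined than the paper's, which first passes through the embedding $[H^s_{loc}(I)]'\hookrightarrow \mathcal{E}'(I)$ to locate a compact support before cutting off; you bypass this by building $L(u)=\Lambda(\psi u|_I)$ directly and using $\varphi_j(1-\psi)\equiv 0$ to see $\Lambda$ ignores the tail. For the second identification the paper gives a hands-on construction analogous to the first, whereas you invoke the general biduality principle $(X'_\sigma)'\cong X$ for locally convex spaces; this is a legitimate shortcut and arguably cleaner, though it does presuppose that $H^{-s}_c(I)$ carries the weak-$*$ topology inherited from the identification with $[H^s_{loc}(I)]'$, which you should state explicitly since $H^{-s}_c(I)$ also carries a natural inductive-limit topology.
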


\begin{theorem}
Let $a(D):H^{s+m}_{0}(I) \subset H^{s}_{loc}(I) \longrightarrow H^{s}_{loc}(I)$ be a differential operator of order $m$ with hypoelliptic formal transpose $a(D)'.$ 
There exists $0<\delta \leq 1$ such that
$
H^{-s+m}_{c}(I) \subset D\left[a(D)\mbox{*}\right] \subset H^{-s+\delta m}_{c}(I).
$
\end{theorem}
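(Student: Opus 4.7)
The argument splits naturally into the two inclusions. For the left inclusion $H^{-s+m}_c(I) \subset D[a(D)\mbox{*}]$ the key observation is that the formal transpose $a(D)'$ is itself a constant-coefficient differential operator of order $m$, hence maps $H^{-s+m}_c(I)$ continuously into $H^{-s}_c(I)$. Given $g \in H^{-s+m}_c(I)$, I would choose $j \in \N$ large enough that $\supp g \subset I_j$, so that $\varphi_j \equiv 1$ on a neighbourhood of $\supp g$ and in particular on $\supp a(D)'g$. For $u \in H^{s+m}_{loc}(I)$ the formal-transpose identity, first verified on $C^\infty_c(I)$ and then propagated by the compact support of $g$, gives
\[
\langle g, a(D)u\rangle = \langle a(D)'g, u\rangle = \langle a(D)'g, \varphi_j u\rangle,
\]
and Theorem \ref{multi-sobolew} together with the $H^{-s}(\R)$--$H^{s}(\R)$ duality yields $|\langle g, a(D)u\rangle| \leq \|a(D)'g\|_{H^{-s}(\R)}\, p^{(s)}_j(u)$. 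This is exactly the continuity of $g \circ a(D)$ on $H^{s+m}_{loc}(I)$ in the subspace topology induced by $H^{s}_{loc}(I)$, placing $g$ in $D[a(D)\mbox{*}]$.

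For the right inclusion $D[a(D)\mbox{*}] \subset H^{-s+\delta m}_c(I)$, fix $g \in D[a(D)\mbox{*}]$. The continuity of $g \circ a(D)$ provides an extension $a(D)\mbox{*}g \in [H^{s}_{loc}(I)]' = H^{-s}_c(I)$, and testing against $\varphi \in C^\infty_c(I) \subset H^{s+m}_{loc}(I)$ gives
\[
\langle a(D)\mbox{*}g, \varphi\rangle = \langle g, a(D)\varphi\rangle = \langle a(D)'g, \varphi\rangle,
\]
so $a(D)\mbox{*}g$ and $a(D)'g$ agree as distributions in $I$. Hence $a(D)'g \in H^{-s}_c(I) \subset H^{-s}_{loc}(I)$, and the hypoellipticity hypothesis on $a(D)'$ activates item (5) of Theorem \ref{hormander}: there is a universal $\delta > 0$ such that every distributional solution of $a(D)'g = f$ with $f \in H^{-s}_{loc}(I)$ already lies in $H^{-s+\delta m}_{loc}(I)$. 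Combined with the fact that $g$ has compact support in $I$, this upgrades to $g \in H^{-s+\delta m}_c(I)$. The upper bound $\delta \leq 1$ is the standard ceiling on the regularity gain of a hypoelliptic operator of order $m$, saturated precisely in the elliptic case.

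\emph{Where the main difficulty sits.} The most delicate point is the identification $a(D)\mbox{*}g = a(D)'g$ appearing in the second inclusion: one must verify that the abstract dual element produced by continuity of $g \circ a(D)$ on the subspace $H^{s+m}_{loc}(I)$ really coincides, as a distribution on $I$, with the image of $g$ under the formal transpose. This reduces to evaluating both sides on $C^\infty_c(I)$, where the explicit formula for $a(D)'$ pins the pairing down; once this step is secured, the chain of inclusions follows from Theorem \ref{multi-sobolew} and the hypoellipticity statement in Theorem \ref{hormander}.
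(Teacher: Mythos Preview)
Your proposal is correct and follows essentially the same route as the paper: for the first inclusion you pick $\varphi_j \equiv 1$ on a neighbourhood of $\supp g$, move $a(D)$ onto $g$ as the formal transpose, and bound the resulting $H^{-s}$--$H^{s}$ pairing by a single seminorm $p^{(s)}_j$; for the second inclusion you identify $a(D)\mbox{*}g$ with $a(D)'g$ by testing against $C^\infty_c(I)$ and then invoke item~(5) of Theorem~\ref{hormander}. The paper's proof differs only cosmetically, writing out $a(D)'g$ as the explicit sum $\sum_{k=0}^{m}(-2\pi i)^{-k}a_k\frac{d^k g}{dx^k}$ and estimating each term separately rather than packaging the bound through Theorem~\ref{multi-sobolew}.
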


\begin{proof}
\textbf{Part I) $H_{c}^{-s+m}(I) \subset D\left[a(D)\mbox{*}\right]$}

First of all, consider $u \in H^{s+m}_{loc}(I)$ and $g \in H^{-s+m}_{c}(I).$ Denote $K=\supp g,$ so there  exists $j \in \N$ with $\varphi_{j} \in C^{\infty}_{c}(I)$ and $\varphi_{j} \equiv 1$ in a neighborhood of $K,$ where $\varphi_{j}$ is a test function from the family of seminorms of $H^{s}_{loc}(I).$ 

In such conditions, as $\frac{d^{k}g}{dx^{k}} \in H^{-s}(\R)$, for $1 \leq k \leq m,$ we have $\left| \left\langle g, a(D)u \right\rangle \right|=\left| \left\langle g,  \sum_{k=0}^{m} (2 \pi i)^{-k}a_{k}\frac{d^{k}u}{dx^{k}} \right\rangle \right| = \left| \left\langle  \sum_{k=0}^{m} (-2 \pi i)^{-k}a_{k}\frac{d^{k} g}{dx^{k}}, u \right\rangle \right| =$
\begin{eqnarray*}
         &=& \left| \left\langle  \sum_{k=0}^{m} (-2 \pi i)^{-k}a_{k}\frac{d^{k} g}{dx^{k}}, \varphi_{j}u \right\rangle \right| \leq \sum_{k=0}^{m} A_{k} \left\|\frac{d^{k}g}{dx^{k}}\right\|_{H^{-s}(\R)} \|\varphi_{j}u\|_{H^{s}(\R)} \leq \\
    &\leq&
    \sum_{k=0}^{m} \tilde{A}_{k} \left\|g\right\|_{H^{-s+k}(\R)} \|\varphi_{j}u\|_{H^{s}(\R)} \leq  C\|g\|_{H^{-s+m}(\R)} p_{j}^{(s)}(u),
\end{eqnarray*}
because $H^{-s+m}(\R) \hookrightarrow H^{-s+k}(\R) \hookrightarrow H^{-s}(\R)$, for each $1 \leq k \leq m,$ where $A_{0},  \ldots, A_{m},$ $\tilde{A}_{0}, \ldots, \tilde{A}_{m}$ and $C$ are constants.

Note that it was possible to obtain the continuity relative to the topology of $H^{s}_{loc}(I)$ only because $\supp \left[\sum_{k=0}^{m} (-2 \pi i)^{-k}a_{k}\frac{d^{k} g}{dx^{k}} \right] \subset \supp g \subset K$ and $\frac{d^{k}g}{dx^{k}} \in H^{-s}(\R)$ for $0 \leq k \leq m.$

Therefore $g \in D[a(D\mbox{*})]$ and
$
a(D)\mbox{*}g = \sum_{k=0}^{m} (-2 \pi i)^{-k}a_{k}\frac{d^{k}g}{dx^{k}}.
$
Also, it follows that $H^{-s+m}_{c}(I) \subset D\left[a(D)\mbox{*}\right].$

\textbf{Part II) $D\left[a(D)\mbox{*}\right] \subset H^{-s+\delta m}_{c}(I)$}

Now the goal is to show that there is a $0 < \delta \leq 1$ such that $D\left[a(D)\mbox{*}\right] \subset H^{-s+\delta m}_{c}(I).$ 

Given $g \in D\left[a(D)\mbox{*}\right]$, by definition of the domain $a(D)\mbox{*},$ there are $M>0$ and a seminorm $p^{(s)}_{j}(\cdot)$ such that 
$
\left|\left\langle g, a(D)u \right\rangle\right| \leq M p^{(s)}_{j}( u),\; u \in H^{s+m}_{0}(I).
$
Observe that $a(D)\mbox{*}g \in \left[H^{s}_{loc}(I)\right]' = H^{-s}_{c}(I)$ is the continuous extention of
$
g \circ a(D): H^{s+m}_{0}(I) \subset H^{s}_{loc}(I) \longrightarrow \Comp.
$

Note that, for $\psi \in C^{\infty}_{c}(I),$ it holds  
$$
\left\langle a(D)\mbox{*}g, \psi \right\rangle = \left\langle g, a(D)\psi \right\rangle = \left\langle g,\sum_{l=0}^{m}(2 \pi i)^{-l}a_{l}\frac{d^{l}\psi}{dx^{l}} \right\rangle = \left\langle \sum_{l=0}^{m}(-2 \pi i)^{-l}a_{l}\frac{d^{l}g}{dx^{l}}, \psi, \right\rangle 
$$
which means $\sum_{l=0}^{m}(-1)^{l}a_{l}\frac{d^{l}g}{dx^{l}} = a(D)\mbox{*}g$ as distributions in $\Dl(I)$ and, since $a(D)\mbox{*}g \in H^{-s}_{c}(I) \hookrightarrow H^{-s}_{loc}(I),$ it implies that $\sum_{l=0}^{m}(-2 \pi i)^{-l}a_{l}\frac{d^{l}g}{dx^{l}} \in H^{-s}_{loc}(I).$

Since $\sum_{l=0}^{m}(-2 \pi i)^{-l}a_{l} \frac{d^{l}}{dx^{l}}$ is hypoelliptic, by Theorem \ref{hormander}, there is $0 < \delta \leq 1$ such that $g \in H^{-s+\delta m}_{loc}(I).$
Henceforth, $g \in H^{-s+\delta m}_{loc}(I)$ with $\supp g$ compact, i.e., $g \in H^{-s+\delta m}_{c}(I),$ and that completes the proof.

\end{proof}

\begin{corollary}
In the theorem above, if $a(D)'$ is elliptic, then $\delta = 1$  and, consequently, $D\left[a(D)\mbox{*}\right] = H^{-s+m}_{c}(I).$ Furthermore, it holds that
$$
a(D)\mbox{*}g = \sum_{k=0}^{m} (-2 \pi i)^{-k}a_{k}\frac{d^{k}g}{dx^{k}}, \mbox{ para } g \in H^{-s+m}_{c}(I).
$$
\end{corollary}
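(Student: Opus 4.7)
Since ellipticity implies hypoellipticity (via the equivalences in Theorem \ref{hormander}), the previous theorem applies and already delivers the inclusion $H^{-s+m}_c(I) \subset D[a(D)\mbox{*}]$ together with the explicit formula $a(D)\mbox{*}g = \sum_{k=0}^m (-2\pi i)^{-k} a_k \frac{d^k g}{dx^k}$ on $H^{-s+m}_c(I)$. So the corollary will follow as soon as I show the reverse inclusion $D[a(D)\mbox{*}] \subset H^{-s+m}_c(I)$, i.e., as soon as I upgrade the exponent $\delta$ obtained in Part II of the previous proof to $\delta=1$.

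First I would unfold the hypothesis. The formal transpose $a(D)'$ has symbol $a'(\xi)=\sum_{k=0}^m (-1)^k a_k \xi^k$, a polynomial of degree exactly $m$; Definition \ref{def-elipt} yields $c,R>0$ with $|a'(\xi)|\geq c|\xi|^m$ for $|\xi|\geq R$. Combined with the trivial polynomial bound $|(a')^{(\alpha)}(\xi)|\leq C_\alpha (1+|\xi|)^{m-|\alpha|}$, this immediately gives $|(a')^{(\alpha)}(\xi)|\leq C'_\alpha |\xi|^{-|\alpha|}|a'(\xi)|$ for $|\xi|>R$, which is precisely item (4) of Theorem \ref{hormander} with $\delta=1$. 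The equivalence of items (4) and (5) then supplies the sharp elliptic regularity: every $u\in\Dl(I)$ with $a(D)'u\in H^{\tau}_{loc}(I)$ satisfies $u\in H^{\tau+m}_{loc}(I)$.

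Finally, I would plug this back into Part II of the previous proof. For any $g\in D[a(D)\mbox{*}]$ that argument shows $a(D)\mbox{*}g = a(D)'g$ in $\Dl(I)$ and $a(D)\mbox{*}g\in H^{-s}_c(I)\subset H^{-s}_{loc}(I)$; taking $\tau=-s$ in the regularity statement gives $g\in H^{-s+m}_{loc}(I)$, and since $g\in D[a(D)\mbox{*}]\subset H^{-s}_c(I)$ already has compact support in $I$, this refines to $g\in H^{-s+m}_c(I)$. Combined with the inclusion from Part I this yields the equality $D[a(D)\mbox{*}]=H^{-s+m}_c(I)$, and the formula for $a(D)\mbox{*}$ is exactly the one derived in Part I, now known to hold on the entire domain. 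The only delicate point is the polynomial-degree verification that ellipticity forces the sharp value $\delta=1$ (rather than some smaller $\delta>0$); beyond that step, the corollary is pure bookkeeping on top of the preceding theorem.
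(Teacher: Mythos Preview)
Your proposal is correct and matches what the paper intends: the paper states this corollary without proof, treating it as immediate from the preceding theorem once one observes that ellipticity forces the regularity gain $\delta=1$ in Theorem~\ref{hormander}. Your verification of item~(4) with $\delta=1$ from the ellipticity lower bound $|a'(\xi)|\geq c|\xi|^m$, followed by rerunning Part~II of the previous proof with this sharp exponent, is exactly the intended filling-in; the only mild looseness is assuming the same $\delta$ transfers between items~(4) and~(5) of Theorem~\ref{hormander} (the theorem as stated is an existential equivalence), but this is standard elliptic regularity and is in any case covered by the reference~\cite{follandedp} the paper cites.
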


Now we compare the sets $\sigma(a(D))$ and $\sigma(a(D)\mbox{*}),$ where 
$
a(D): H^{s+m}_{0}(I) \subset H^{s}_{loc}(I) \to H^{s}_{loc}(I) 
$
is a differential operator with constant coefficients and 
$
a(D)\mbox{*}: D(a(D\mbox{*})) \subset H^{-s}_{c}(I) \to H^{-s}_{c}(I)
$
is its hypoelliptic dual.


\begin{theorem}
Let $a(D):H^{s+m}_{0}(I) \subset H^{s}_{loc}(I) \longrightarrow H^{s}_{loc}(I)$ be a linear differential operator with constant coefficients of order $m$ such its formal transpose $a(D)'$ is hypoellitic. Under such conditions, the following inclusions are true
\bigskip
\begin{itemize} 
\item[(i)]
$\sigma_{p}(a(D))\cup \sigma_{r}(a(D)) \subset \sigma_{p}(a(D)\mbox{*})\cup \sigma_{r}(a(D)\mbox{*})$.

\item[(ii)] 
$\sigma_{p}(a(D)\mbox{*}) \subset \sigma_{p}(a(D)) \cup \sigma_{r}(a(D))$; and

\item[(iii)]
$\sigma_{r}(a(D)\mbox{*}) \subset \sigma_{p}(a(D)) \cup \sigma_{c}(a(D)).$
\end{itemize}
\end{theorem}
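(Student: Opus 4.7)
All three inclusions rest on the duality $[H^{s}_{loc}(I)]' = H^{-s}_{c}(I)$ from the preceding theorem, Hahn--Banach (Theorem \ref{hahn-banach-frechet}), and the basic pairing
\[
\langle g, (\lambda - a(D))u\rangle = \langle (\lambda - a(D)\mbox{*})g, u\rangle,\qquad g \in D(a(D)\mbox{*}),\ u \in H^{s+m}_{0}(I).
\]
Hypoellipticity of $a(D)'$ enters through Theorem \ref{hormander}: any distributional solution of $a(D)'g = \lambda g$ with $g \in H^{-s}_{c}(I)$ iteratively regularizes into $H^{-s+m}_{c}(I) \subset D(a(D)\mbox{*})$, thereby producing a genuine kernel element of $\lambda - a(D)\mbox{*}$.

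For (i) I would argue by contraposition. Assume $\lambda - a(D)\mbox{*}$ is injective with dense range. If $u \in H^{s+m}_{0}(I)$ satisfies $(\lambda - a(D))u = 0$, then the pairing identity makes $u$ annihilate the dense subset $R(\lambda - a(D)\mbox{*})$ of $H^{-s}_{c}(I)$, forcing $u=0$. If $R(\lambda - a(D))$ were not dense, Hahn--Banach would produce $0 \ne g \in H^{-s}_{c}(I)$ with $\langle g, (\lambda - a(D))\psi\rangle = 0$ for all $\psi \in C^{\infty}_{c}(I)$; integration by parts followed by hypoelliptic bootstrapping would then place $g$ in $D(a(D)\mbox{*})$ with $(\lambda - a(D)\mbox{*})g = 0$, contradicting injectivity. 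Part (ii) is the direct dual statement: given $0 \ne g \in \ker(\lambda - a(D)\mbox{*})$, the pairing forces $g$ to annihilate $R(\lambda - a(D))$; since $g \ne 0$ as a functional on $H^{s}_{loc}(I)$, the range is not dense, so $\lambda \in \sigma_{p}(a(D)) \cup \sigma_{r}(a(D))$ according as $\lambda - a(D)$ fails to be injective or not.

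Part (iii) is the subtlest: given $\lambda \in \sigma_{r}(a(D)\mbox{*})$ I must separately exclude $\lambda \in \sigma_{r}(a(D))$ and $\lambda \in \rho(a(D))$. Exclusion of $\sigma_{r}(a(D))$ repeats the Hahn--Banach plus hypoelliptic-bootstrap argument from (i), which otherwise would contradict injectivity of $\lambda - a(D)\mbox{*}$. Exclusion of $\rho(a(D))$ proceeds by transposing the resolvent: by Theorem \ref{fechadofrechet}, $\lambda \in \rho(a(D))$ implies $\lambda \in \rho(\overline{a(D)})$, so $\lambda - \overline{a(D)}$ is bijective with continuous inverse $B: H^{s}_{loc}(I) \to H^{s}_{loc}(I)$, and the transpose $B^{T}: H^{-s}_{c}(I) \to H^{-s}_{c}(I)$ is continuous. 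The identity $Ba(D)u = \lambda Bu - u$ on $D(a(D))$ yields
\[
\langle B^{T}g, a(D)u\rangle = \lambda\langle B^{T}g, u\rangle - \langle g, u\rangle,
\]
from which one reads off that $B^{T}g \in D(a(D)\mbox{*})$ and $(\lambda - a(D)\mbox{*})B^{T}g = g$ for every $g \in H^{-s}_{c}(I)$. Thus $\lambda - a(D)\mbox{*}$ is surjective, contradicting $\lambda \in \sigma_{r}(a(D)\mbox{*})$, and the only remaining possibility is $\lambda \in \sigma_{c}(a(D))$.

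The main obstacle is the verification that $B^{T}g$ really lies in $D(a(D)\mbox{*})$: this demands continuity of $u \mapsto \langle B^{T}g, a(D)u\rangle$ on all of $H^{s+m}_{loc}(I)$ with the weaker $H^{s}_{loc}(I)$ topology, not just on $H^{s+m}_{0}(I)$. I would handle this by extending the identity $Ba(D)u = \lambda Bu - u$ from $D(a(D))$ to $D(\overline{a(D)}) \supset H^{s+m}_{loc}(I)$ using the closure $\overline{a(D)}$ together with the density of $C^{\infty}_{c}(I)$ in $H^{s+m}_{loc}(I)$; the right-hand side $\lambda\langle B^{T}g, u\rangle - \langle g, u\rangle$ is then manifestly continuous in the $H^{s}_{loc}(I)$ topology, closing the argument.
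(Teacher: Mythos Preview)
Your argument is correct, and the overall architecture matches the paper's: Hahn--Banach for non-dense range, the duality pairing for eigenvectors, and transposing the resolvent for (iii). Two points of comparison are worth making.

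First, your repeated appeal to hypoelliptic bootstrapping is a detour the paper never takes. When Hahn--Banach produces $0\ne g\in H^{-s}_c(I)$ annihilating $R(\lambda-a(D))$, one has $\langle g,a(D)u\rangle=\lambda\langle g,u\rangle$ for all $u\in H^{s+m}_0(I)$; the right side is continuous in the $H^s_{loc}$ topology because $g\in[H^s_{loc}(I)]'$, so $g\in D(a(D)\mbox{*})$ \emph{by definition}, with no regularity argument needed. In fact the paper's proof of this theorem never uses the hypoellipticity hypothesis at all (it is only used in the preceding theorem locating $D(a(D)\mbox{*})$).

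Second, for (iii) the paper does not pass through the closure. It works with the partial inverse $(\lambda-a(D))^{-1}:R(\lambda-a(D))\to H^s_{loc}(I)$ and establishes the two identities
\[
\big[(\lambda-a(D))^{-1}\big]\mbox{*}(\lambda-a(D)\mbox{*})g\big|_{R(\lambda-a(D))}=g\big|_{R(\lambda-a(D))},\qquad
(\lambda-a(D)\mbox{*})\big[(\lambda-a(D))^{-1}\big]\mbox{*}g=g,
\]
yielding $R(\lambda-a(D)\mbox{*})=D\big[\big((\lambda-a(D))^{-1}\big)\mbox{*}\big]$; continuity of $(\lambda-a(D))^{-1}$ then forces this to be all of $H^{-s}_c(I)$. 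Your route via $\overline{a(D)}$ and $B=(\lambda-\overline{a(D)})^{-1}$ works too, but the ``main obstacle'' you flag is a phantom: membership of $B^Tg$ in $D(a(D)\mbox{*})$ only requires continuity of $u\mapsto\langle B^Tg,a(D)u\rangle$ on $D(a(D))=H^{s+m}_0(I)$ with the $H^s_{loc}$ topology, and your displayed identity $\langle B^Tg,a(D)u\rangle=\lambda\langle B^Tg,u\rangle-\langle g,u\rangle$ already delivers that. No extension to $H^{s+m}_{loc}(I)$ is needed.
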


\begin{proof}
The proof is splitted into four steps.

\textbf{Step I: $\sigma_{r}(a(D)) \subset \sigma_{p}\left(a(D)\mbox{*}\right)$.}

Given $\lambda \in \sigma_{r}(a(D)),$ by the definition of residual spectrum, $\lambda - a(D)$ is injective and $\overline{R(\lambda - a(D))} \neq H^{s}_{loc}(I)$ so, by Theorem \ref{hahn-banach-frechet}, there is a functional $g \neq 0$, which is an element of $H^{-s}_{c}(I),$ that satisfies 
$
\left\langle g, (\lambda - a(D))u \right\rangle = 0,$ for each $u \in H^{s+m}_{0}(I).
$

From the above equality, it follows that $g \in D\left[a(D)\mbox{*}\right] \subset H^{-s+\delta m}_{c}(I)$ with 
$$
\left\langle \left(\lambda - a(D)\mbox{*}\right)g, u \right\rangle = \left\langle g, \left(\lambda - a(D)\right)u \right\rangle = 0, \; \forall \; u \in H^{s+m}_{0}(I),
$$
in other words $\left(\lambda - a(D)\mbox{*}\right)g = 0$ with $g \neq 0$ therefore $\lambda \in \sigma_{p}(a(D)\mbox{*}).$

\textbf{Step II: $\sigma_{p}(a(D)) \subset \sigma_{p}(a(D)\mbox{*}) \cup \sigma_{r}(a(D)\mbox{*}).$}

If $\lambda \in \sigma_{p}(a(D)),$ for some $u\not =0$ in $H^{s+m}_{0}(I),$ we have $(\lambda - a(D))u = 0.$ So, for any $g \in H^{-s}_{c}(I)$ it is true that $\langle g, (\lambda - a(D))u \rangle = 0$ and then 
$$
\left\langle (\lambda - a(D)\mbox{*})g, u \right\rangle = \left\langle g, (\lambda - a(D))u \right\rangle = 0, \; \forall \; g \in D(a(D)\mbox{*})\subset H^{-s}_{c}(I).
$$


On the other hand, if $\lambda - a(D)\mbox{*}$ is injective, suppose that $\overline{R(\lambda - a(D)\mbox{*})} = H^{-s}_{c}(I).$ The previous equality implies that $u = 0$, which contradicts the inicial hypothesis, so $\overline{R(\lambda - a(D)\mbox{*})} \neq H^{-s}_{c}(I)$ and therefore  $\sigma_{p}(a(D)) \subset \sigma_{p}(a(D)\mbox{*}) \cup \sigma_{r}(a(D)\mbox{*}).$ 

Joining this result with the first step we get 
$$\sigma_{p}(a(D))\cup \sigma_{r}(a(D)) \subset \sigma_{p}(a(D)\mbox{*})\cup \sigma_{r}(a(D)\mbox{*}),$$ which is exactly $(i)$.

\textbf{Step III: $\sigma_{p}\left(a(D)\mbox{*}\right) \subset \sigma_{p}(a(D)) \cup \sigma_{r}(a(D))$. }

For $\lambda \in \sigma_{p}(a(D)\mbox{*}),$ we have $(\lambda - a(D)\mbox{*})g = 0$ for some $g\not =0$ in  $D[a(D)\mbox{*}]$ and so
$
\left\langle \left(\lambda - a(D)\mbox{*}\right)g, u \right\rangle = 0, \; \forall \; u \in H^{s}_{loc}(I).
$

In particular, it is true that 
$$
\left\langle g, (\lambda - a(D))u \right\rangle = \left\langle (\lambda - a(D)\mbox{*})g, u \right\rangle = 0, \; \forall \; u \in H^{s+m}_{0}(I).
$$
with $g \neq 0$ and, once again, by Theorem \ref{hahn-banach-frechet}, we conclude that $\overline{R\left(\lambda - a(D)\right)} \neq H^{s}_{loc}(I)$ and then $\lambda \in \sigma_{p}(a(D)) \cup \sigma_{r}(a(D)),$ which establishes $(ii)$.

\textbf{Step IV: $\sigma_{r}(a(D)\mbox{*}) \subset \sigma_{p}(a(D)) \cup \sigma_{c}(a(D))$.}

Consider $\lambda \in \Comp$ such that $\lambda - a(D)$ is injective and $\overline{R(\lambda - a(D))} = H^{s}_{loc}(I),$ which means that $\lambda \notin \sigma_{p}(a(D))\cup \sigma_{r}(a(D))$ and therefore, from Step III, follows that $\lambda \notin \sigma_{p}(a(D))\mbox{*},$ i.e., $\lambda -a(D)\mbox{*}$ is injective. Let's show that 
$$
(\lambda - a(D))^{-1}: R(\lambda - a(D)) \subset H^{s}_{loc}(I) \longrightarrow H^{s}_{loc}(I)
$$
is continuous, so $R(\lambda - a(D)\mbox{*}) = H^{-s}_{c}(I),$ i.e., in other words, if $\lambda \in \rho(a(D))$ then $\lambda \notin \sigma_{r}(a(D)\mbox{*})$, i.e., $\sigma_{r}(a(D)\mbox{*}) \subset \sigma(a(D)).$  

First, we shall prove the following equalities
\begin{equation}
\left[(\lambda - a(D))^{-1}\right]\mbox{*}\left(\lambda - a(D)\mbox{*}\right)g \mid_{R(\lambda - a(D))} = g \mid_{R(\lambda - a(D))}, \;g \in D(a(D)\mbox{*}) \label{identidade-a(D)*1}
\end{equation}
and
\begin{equation}
(\lambda - a(D)\mbox{*})\left[(\lambda - a(D))^{-1}\right]\mbox{*}g = g, \;g \in D\left[(\lambda - a(D))^{-1}\right]\mbox{*}. \label{identidade-a(D)*2}
\end{equation}

Indeed, for  $g \in D(a(D) \mbox{*})$ we have 
\begin{equation*}
     \left\langle \left(\lambda - a(D)\mbox{*}\right)g, \left(\lambda - a(D)\right)^{-1}f \right\rangle = \langle g,f \rangle,  \; f \in R(\lambda - a(D))
\end{equation*}
and then $\big[(\lambda - a(D)\mbox{*}) g \big] \circ \left(\lambda - a(D)\right)^{-1}$ is continuous, which proves that  $R(\lambda - a(D)\mbox{*}) \subset D\left[(\lambda - a(D))^{-1}\right]\mbox{*}$ and \eqref{identidade-a(D)*1} holds.

Given $g \in D\left[(\lambda - a(D))^{-1}\right]\mbox{*}$ we have $\left\langle \left[\left(\lambda - a(D)\right)^{-1}\right] \mbox{*} g, \left(\lambda - a(D)\right)u \right\rangle =$
$$
= \left\langle g, \left(\lambda - a(D)\right)^{-1}\left(\lambda - a(D)\right)u \right\rangle = \langle g, u \rangle, \;u \in H^{s+m}_{0}(I),
$$
so
$\left\{\left[\left(\lambda - a(D)\right)^{-1}\right]\mbox{*}g\right\} \circ (\lambda - a(D)): H^{s+m}_{0}(I) \longrightarrow \Comp
$
is continuous considering the topology induced by $H^{s}_{loc}(I).$ Hence $\left[\left(\lambda - a(D)\right)^{-1}\right]\mbox{*}g \in D(a(D) \mbox{*})$ and \eqref{identidade-a(D)*2} holds, i.e.,
$$
g = \left(\lambda - a(D)\mbox{*}\right)\left[\left(\lambda - a(D)\right)^{-1}\right]\mbox{*}g, \;g \in D\left[\left(\lambda - a(D)\right)^{-1}\right]\mbox{*},
$$
so $D\left[\left(\lambda - a(D)\right)^{-1}\right]\mbox{*} \subset R\left(\lambda - a(D)\mbox{*}\right),$ which was the inclusion needed for us to conclude the equality
$
D\left[\left(\lambda - a(D)\right)^{-1}\right]\mbox{*} = R\left(\lambda - a(D)\mbox{*}\right).
$

Now, from the continuity of $\left(\lambda - a(D)\right)^{-1}$, we have
$$
R\left(\lambda - a(D)\mbox{*}\right) = D\left[(\lambda - a(D))^{-1}\right]\mbox{*} = H^{-s}_{c}(I).
$$ 
Thus if $\lambda \in \rho(a(D))$, we conclude that $\lambda \notin \sigma_{r}\left(a(D)\mbox{*}\right),$ which means that $\sigma_{r}\left(a(D)\mbox{*}\right) \subset \sigma(a(D)).$

This fact with Step I implies that $\sigma_{r}\left(a(D)\mbox{*}\right) \subset \sigma_{p}(a(D)) \cup \sigma_{c}(a(D)).$ From Step I we know that $\sigma_{r}\left(a(D)\right) \subset \sigma_{p}(a(D)\mbox{*})$ and $\sigma_{r}\left(a(D)\mbox{*}\right) \cap \sigma_{p}(a(D)\mbox{*})= \emptyset,$ which proves $(iii)$ as we wanted.

\end{proof}

Using the above theorem and an additional hypothesis we are able to calculate $\sigma(a(D))$ and $\sigma(a(D)\mbox{*})$ and give a description for all types of spectrum. 

\begin{theorem}\label{teo-principal}
Under the hypotheses of the last theorem with $a(D)'$ elliptic, $a(D)$ and its adjoint $a(D)\mbox{*}$ both have empty resolvent set and, independently of $s\in \R$, their types of spectrum are classified as follows:
$$\sigma_{p}(a(D)) = \sigma_{p}\left(a(D)\mbox{*}\right) = \emptyset,$$ $$\sigma_{r}(a(D)) =\sigma_{c}\left(a(D)\mbox{*}\right) = \emptyset \mbox{ and }$$ $$\sigma_{c}(a(D)) =\sigma_{r}\left(a(D)\mbox{*}\right) = \Comp.$$

\end{theorem}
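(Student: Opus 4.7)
The plan is to establish two direct facts about $a(D)*$---that it has no eigenvalues and that every $\lambda \in \Comp$ lies in its residual spectrum---and let the inclusions $(i)$, $(ii)$, $(iii)$ of the previous theorem, together with the disjointness of $\sigma_p$, $\sigma_r$, $\sigma_c$, carry the rest.

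\emph{First direct step: $\sigma_p(a(D)*) = \emptyset$.} Suppose $g \in D[a(D)*] = H^{-s+m}_c(I)$ satisfies $(\lambda - a(D)*)g = 0$. By the preceding corollary, $a(D)*g = \sum_{k=0}^{m}(-2\pi i)^{-k} a_k g^{(k)}$, so $g$ solves on $I$ a linear constant-coefficient ODE of order $m$ whose leading operator is elliptic. Elliptic regularity places $g$ in $C^\infty(I)$, and then $g$ is a finite $\Comp$-linear combination of polynomial-exponentials $x^j e^{\mu_i x}$. Such a function is real-analytic on $I$ and cannot vanish on a nonempty open subinterval unless it vanishes identically; since $g$ has compact support in $I$, this forces $g \equiv 0$.

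\emph{Second direct step: $\sigma_r(a(D)*) = \Comp$.} Fix $\lambda \in \Comp$. The equation $(a(D)-\lambda)v = 0$ admits a nonzero smooth solution $v \in C^\infty(I) \subset H^{s+m}_{loc}(I) \subset H^{s}_{loc}(I)$ (pick a polynomial-exponential associated with a root of $a(\xi) = \lambda$). Under the isomorphism $[H^{-s}_c(I)]' \cong H^s_{loc}(I)$, this $v$ is a nonzero continuous linear functional on $H^{-s}_c(I)$. For each $g \in D[a(D)*]$ the adjoint relation gives
\[\left\langle \left(\lambda - a(D)*\right)g,\, v \right\rangle = \left\langle g,\, \left(\lambda - a(D)\right)v \right\rangle = 0,\]
so $v$ annihilates $R(\lambda - a(D)*)$. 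By continuity, $v$ also vanishes on $\overline{R(\lambda - a(D)*)}$, which is therefore a proper subspace of $H^{-s}_c(I)$. Combined with the injectivity of $\lambda - a(D)*$ from the first step, this gives $\lambda \in \sigma_r(a(D)*)$; hence $\sigma_r(a(D)*) = \Comp$ and by disjointness $\sigma_c(a(D)*) = \emptyset$.

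\emph{Conclusion.} Inclusion $(iii)$ now reads $\Comp = \sigma_r(a(D)*) \subset \sigma_p(a(D)) \cup \sigma_c(a(D))$, so it remains to show $\sigma_p(a(D)) = \emptyset$. For $u \in H^{s+m}_0(I)$ with $(a(D)-\lambda)u = 0$, elliptic regularity again makes $u$ a polynomial-exponential on $I$; then the zero-extension $\tilde u$ belongs to $H^{s+m}(\R)$ with support in $\overline{I}$, so $(a(D)-\lambda)\tilde u \in H^s(\R)$ is supported on the two-point set $\partial I$. In the appropriate Sobolev range this forces $(a(D)-\lambda)\tilde u$ to vanish, making $\tilde u$ a globally defined compactly supported ODE solution, and since no nontrivial polynomial-exponential has compact support, $\tilde u = 0$. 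Carrying out this boundary/support analysis carefully in $s$ is the main obstacle of the proof. Once $\sigma_p(a(D)) = \emptyset$ is in place, inclusion $(iii)$ collapses to $\sigma_c(a(D)) = \Comp$, disjointness yields $\sigma_r(a(D)) = \emptyset$, both operators have empty resolvent set, and the classification is complete.
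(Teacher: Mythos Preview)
Your proposal follows the paper's route almost exactly: first show $\sigma_p(a(D)*)=\emptyset$ because a compactly supported polynomial-exponential must vanish, then show $\sigma_r(a(D)*)=\Comp$ by exhibiting a nonzero $v\in C^\infty(I)\subset H^s_{loc}(I)$ solving $(\lambda-a(D))v=0$ and annihilating $R(\lambda-a(D)*)$, and finally feed inclusion $(iii)$ with $\sigma_p(a(D))=\emptyset$ to get $\sigma_c(a(D))=\Comp$.

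The only substantive difference is in how $\sigma_p(a(D))=\emptyset$ is argued. You extend $u$ by zero to $\tilde u\in H^{s+m}(\R)$ and try to force $(a(D)-\lambda)\tilde u=0$ from its support lying in $\partial I$; you rightly flag this as the delicate point, since for $s\le -1/2$ nontrivial combinations of $\delta_{\partial I}^{(k)}$ can live in $H^s(\R)$ and the support argument alone does not close. The paper bypasses this entirely: it uses that $u\in H^{s+m}_0(I)$ forces the traces $u,u',\dots,u^{(m-1)}$ to vanish on $\partial I$, and then observes that the $m$ conditions at a single endpoint already form a nonsingular (Vandermonde/Wronskian) linear system in the coefficients of the exponential basis, so $u\equiv 0$. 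This boundary-value route is cleaner and is exactly what your zero-extension argument is implicitly encoding; replacing your third step with it removes the obstacle you identified.
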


\begin{proof}

First of all, lets prove that $\sigma_{p}\left(a(D)\mbox{*}\right) = \emptyset.$ 

If $\left(\lambda-a(D)\mbox{*}\right)g = 0,$ for some $g \in D\left(a(D)\mbox{*}\right),$ then $g(x) = \sum_{j=1}^ {m}C_{j}e^{\beta_{j} x}$ for $C_1, \cdots,C_n \in \Comp$ where 
$\beta_{1},\ldots,\beta_{m} \in \Comp$ are the roots of the polynomial \footnote{If $m=2$ and $\lambda - a\mbox{*}(\xi)=0$ has only one root $\beta_{0}$, as we know from ODE's, the solution is given by $g(x) = C_{1}e^{\beta_{0} x} + C_{2}x e^{\beta_{0} x}.$ For the general case we proceed in an analogous way for each non simple root.}
$\lambda - \sum_{k=1}^{m} (-2\pi i)^{-k} a_{k}\xi^{k}. $
Since $D(a(D)\mbox{*}) = H^{-s+m}_{c}(I),$ $g$ has compact support which implies that $g \equiv 0$, so $\sigma_{p}(a(D)\mbox{*}) = \emptyset.$

We claim that there exist $u\not=0$ in $ H^{s}_{loc}(I)$, such that
\begin{equation}
\left\langle u, \left(\lambda - a(D)\mbox{*}\right)g \right\rangle = 0, \; \forall g \in D\left(a(D)\mbox{*}\right),
\label{simbolo-lambda-a(D)}
\end{equation}
which will give us that $\overline{R(\lambda - a(D)\mbox{*})} \neq H^{-s}_{c}(I).$

Indeed, note that if $u \in C^{\infty}(I) \subset H^{s}_{loc}(I)$ then
$
\langle u, (\lambda - a(D)\mbox{*})g \rangle = \langle (\lambda - a(D))u, g \rangle, \; g \in D(a(D)\mbox{*}).
$
So, if we chose  $u(x) = e^{\xi_{0} x},$ where $\xi_{0} \in \Comp$ is a root of the polynomial 
$
\lambda - \sum_{k=1}^{m} (2\pi i)^{-k} a_{k}\xi^{k},
$
 then we get that $u \in C^{\infty}(I),$ $u\not =0$ and satisfies $(\lambda - a(D))u = 0.$ For this reason
$$
\langle u, (\lambda - a(D)\mbox{*})g \rangle = \langle (\lambda - a(D))u, g \rangle = 0, \; \forall \; g \in D(a(D)\mbox{*})
$$
and we conclude that $\sigma_{r}(a(D)\mbox{*}) = \Comp.$

Now we show that $\sigma_{p}(a(D)) = \emptyset.$ To do so, note that $(\lambda - a(D)) u = 0$ implies $u(x) = \sum_{j=1}^{m}A_{j}e^{\beta_{j} x}$, for $A_{j} \in \Comp$ and $\beta_{1},\ldots, \beta_{m} \in \Comp$ the roots of \eqref{simbolo-lambda-a(D)}. \footnote{The comment of the previous proof for $\sigma_{p}(a(D)\mbox{*}) = \varnothing$ is valid here.} Nevertheless, in order to $u \in H^{s+m}_{0}(I)$, we need that $u,u',\ldots,u^{(m-1)}$ are equal to zero on $\partial I.$ Since for each $l\in {\mathbb N}$,
$u^l(x) = \sum_{j=1}^{m} A_{j} \beta_{j}^{l}e^{\beta_{j}x},$ 
wrinting $I=(b,c)$, we get the following system of equations
$u(b)  =  \sum_{j=1}^ {m}A_{j}e^{\beta_{j} b} = 0,  \cdots,$
$u^{m-1}(b)  =  \sum_{j=1}^{m}A_{j} \beta_{j}^{m-1} e^{\beta_{j} b}  = 0$
and
$u(c) = \sum_{j=1}^{m} A_{j}e^{\beta_{j} c} = 0,  \cdots,$
$u^{m-1}(c)  =  \sum_{j=1}^{m} A_{j} \beta_{j}^{m-1} e^{\beta_{j} c}  = 0.$

Solving them, on the $A_j$'s variables, we conclude that $u \equiv 0$ and, therefore, $\sigma_{p}(a(D)) = \emptyset.$ 

Finally, from what we have proved here with the inclusions given by the previous theorem, it gives us that $\Comp = \sigma_{r}(a(D)\mbox{*}) \subset \sigma_{p}(a(D)) \cup \sigma_{c}(a(D)) = \sigma_{c}(a(D))$, then $\sigma_{c}(a(D)) = \Comp,$ completing the proof. 
\end{proof}

\subsection{Closure of a Differential Operator on a Fr\'echet Space}

Here we determine the closure of a differential operator with constant coefficients $a(D)$ of order $m \geq 1$ on $H^{s}_{loc}(I).$ That will allow us to obtain a more precise analysis of the spectrum, in the sense that we can track the change of the values $\lambda \in \Comp$ as we close the operators.

First of all, we need to construct a convenient sequence of functions that will be the main tool to make the calculus of the closure (check \cite{brezisaf} to see the inspiring construction).

Let $I=(a,b)$ be an interval. Given a function $f \in H^{s}_{loc}(I)$, $s \in \R,$ consider its null extension
$$
f_{e}(x) = 
\left \{ 
\begin{array}{cc} 
f(x), \text{ if }  x \in I \\ 
0,  \text{ if }  x \in \R \setminus I, 
\end{array}  
\right. 
$$
which is an element of $H^{s}_{loc}(\R \setminus \partial I ).$ 

Now let $(I_{j})_{j \in \N}$, $I_{j} = (a_{j},b_{j})$ a sequence of open bounded intervals with $I = \bigcup_{j \in \N} I_{j},$ $\overline{I_{j}} \subset I_{j+1}$ and $d(I_{j}, \R \setminus I) \geq 2/j.$

Define $g_{j} = \chi_{I_{j}} \cdot f_{e}$\, \footnote{Note that $g_{j}\in L^{p}(\R),$ for each natural $j$.} and $f_{j} = \phi_{j} \star g_{j},$ where $\chi_{I_{j}}$ is the characteristic function of $I_{j},$ $\phi_{j} \in C^{\infty}_{c}(-1/j,1/j)$ with $\phi_{j} \geq 0$ and $\int_{\R}\phi_{j} = 1,$ for every $j \in \N$.
Note that $f_{j} \in C^{\infty}_{c}(I).$

Given $u \in H^{s}_{loc}(I),$ with $s \in \Z, s \geq 0,$ consider $u=f$ in the above construction, then $u_{j} \doteq \phi_{j} \star g_{j}$, where $g_{j} \doteq \chi_{I_{j}}u_{e}.$

It follows that, for each $0 \leq k \leq s$, $u_{j}^{(k)} = \phi_{j} \star g_{j}^{(k)}$ as distributions in $\Dl(\R)$ and, therefore, since $a$ is of order $m$, we need to find the derivatives $g_{j}', g_{j}'', \ldots, g_{j}^{(m)}$ in $\Dl(\R).$ 


Given $\psi \in C^{\infty}_{c}(\R)$ it follows that
$$
    \langle g_{j}', \psi \rangle = - \langle g_{j}, \psi' \rangle = - \displaystyle \int_{\R} u_{e}(x) \psi'(x) dx 
    = - \int_{I_{j}} u(x) \psi'(x) dx. 
$$
Since $u \mid _{I_{j}} \in H^{s}(I_{j}) \hookrightarrow C^{s-1}(\overline{I_{j}}),$ we have 
\begin{equation} \label{equacaohs}
(\psi u)' = \psi' u + \psi u' \text{ in }  C(I_{j}) \Longleftrightarrow \psi' u  =  (\psi u)' - \psi u' \text{ in } C(I_{j}),
\end{equation}
and then $\langle g_{j}', \psi \rangle = -  \int_{I_{j}} [(u(x) \psi(x))' - u'(x) \psi(x) ] dx = $
\begin{eqnarray*}
     &=& - \displaystyle  \int_{I_{j}} [ u'(x) \psi(x) - (u(x)\psi(x))'] dx = \int_{I_{j}} u'(x) \psi(x) dx - [\psi(x) u(x)]|_{\partial I_{j}}= \\ 
   &=& \langle \chi_{I_{j}}\cdot u' , \psi \rangle  - [\psi(x) u(x)]\mid_{\partial I_{j}} = \langle \chi_{I_{j}}\cdot u', \psi \rangle - u(b_{j}) \langle \delta_{b_{j}}, \psi \rangle + u(a_{j}) \langle \delta_{a_{j}}, \psi \rangle,
\end{eqnarray*}
so we conclude that $g_{j}' = \chi_{I_{j}}\cdot u' - u(b_{j})\delta_{b_{j}} + u(a_{j})\delta_{a_{j}}$ in $\Dl(\R).$

Observe that using the translation $(\tau_{h} \psi)(x)=\psi(x-h)$ and the reflection $(r\psi) (x)=\psi(-x)$, we can write $
(\delta_{p} \star \psi)(x) = \psi(x-p),
$
for every $x,p \in \R.$ 

Applying the same argument as above to $\chi_{I_{j}}\cdot u'$ we conclude that 
$$
g_{j}'' = \chi_{I_{j}} \cdot u'' + u'(a_{j})\delta_{a_{j}} - u'(b_{j})\delta_{b_{j}} + u(a_{j})\delta_{a_{j}}' - u(b_{j})\delta_{b_{j}}'.  
$$
Hence $u''_{j} = \phi_{j} \star g''_{j} = \phi_{j} \star \big[u''\mid _{I_{j}} + u'(a_{j})\delta_{a_{j}} - u'(b_{j})\delta_{b_{j}} + u(a_{j})\delta_{a_{j}} - u(b_{j})\delta_{b_{j}} \big] =$
$$
    \phi_{j} \star [\chi_{I_{j}}u''] +  u'(a_{j})[\phi_{j} \star \delta_{a_{j}}] - u'(b_{j}) [\phi_{j} \star \delta_{b_{j}}]  + u(a_{j})[\phi_{j} \star \delta_{a_{j}}'] - u(b_{j})[\phi_{j} \star \delta_{b_{j}}'], 
$$
i.e., $u''_{j} =\phi_{j} \star [\chi_{I_{j}}u''] +  \sum_{l=0}^{1} \big (u^{(l)}(a_{j})\phi_{j}^{(1-l)}(\cdot -a_{j}) - u^{(l)}(b_{j}) \phi_{j}^{(1-l)}(\cdot -{b_{j}}) \big ).$

For the general case we use induction. Suppose that
$$
g_{j}^{(k)} = \chi_{I_{j}} \cdot u^{(k)}  + \sum_{l=0}^{k-1} \left(u^{(l)}(a_{j})\delta_{a_{j}}^{(k-1-l)} - u^{(l)}(b_{j}),\delta_{b_{j}}^{(k-1-l)}\right)
$$
with $k \leq m-1,$ then
$
g_{j}^{(k+1)} = [\chi_{I_{j}}\cdot u^{(k)}]' + \sum_{l=0}^{k-1} \left(u^{(l)}(a_{j})\delta_{a_{j}}^{(k-l)} - u^{(l)}(b_{j})\delta_{b_{j}}^{(k-l)}\right).
$
Denote $h_{j} \doteq \chi_{I_{j}} \cdot u^{(k)} \in H^{s+m-k}(I_{j}),$ the sentence \eqref{equacaohs} is true in $C^{m-k-1}(I_{j})$ and proceeding as we did for the first derivative of $g_{j},$ it follows that
$
h'_{j} = \chi_{I_{j}} \cdot u^{(k+1)} + u^{(k)}(a_{j})\delta_{a_{j}} - u^{(k)}(b_{j})\delta_{b_{j}}.
$
Therefore 
$$
g_{j}^{(k+1)} = \chi_{I_{j}} \cdot u^{(k+1)} + \sum_{l=0}^{k} \left(u^{(l)}(a_{j})\delta_{a_{j}}^{(k-l)} - u^{(l)}(b_{j})\delta_{b_{j}}^{(k-l)}\right)
$$

So $u^{(k)}_{j} = \phi_{j} \star g^{(k)}_{j} = \phi_{j} \star \left[\chi_{I_{j}} \cdot u^{(k)} + \sum_{l=0}^{k-1}\left(u^{(l)}(a_{j})\delta^{(k-1-l)}_{a_{j}} - u^{(l)}(b_{j})\delta^{(k-1-l)}_{b_{j}}\right)\right]$ and, simplifying, we get $u^{(k)}_{j}=$
 $$   =\phi_{j} \star [\chi_{I_{j}}u^{(k)}] + \sum_{l=0}^{k-1} \left\{u^{(l)}(a_{j})\phi_{j}^{(k-1-l)}(\cdot - a_{j}) - u^{(l)}(b_{j}) \phi_{j}^{(k-1-l)}(\cdot - b_{j})\right\}.$$

The following lemma is fundamental for our purposes.

\begin{lemma} \label{lemmaseque}
Given $u \in H^{s}_{loc}(I)$ with $s \in \Z$ and $s \geq 0,$ for each $0 \leq k \leq s,$ the sequences of functions
$$
\left(u^{(l)}(a_{j})\phi_{j}^{(k-l)}(\cdot - a_{j})\right)_{j \in \N}, \;\;
\left(u^{(l)}(b_{j}) \phi_{j}^{(k-l)}(\cdot - b_{j})\right)_{j \in \N},
$$
where $0 \leq l \leq k-1,$
converge to zero in $H^{s}_{loc}(I).$
\end{lemma}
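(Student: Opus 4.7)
My plan is a pure support argument: each seminorm of $H^{s}_{loc}(I)$ is tested only against what lies in a compact set strictly interior to $I$, while every function in the sequence is supported in an arbitrarily small neighborhood of one of the endpoints $a_j, b_j$, which accumulate on $\partial I$. Consequently, the seminorms will vanish eventually in $j$.

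First I would fix an arbitrary seminorm $p_N^{(s)}(\cdot) = \|\varphi_N \,\cdot\|_{H^{s}(\R)}$ of $H^{s}_{loc}(I)$, so that $\varphi_N \in C^{\infty}_{c}(I_{N+1})$ and in particular $\supp \varphi_N \subset [a_{N+1},b_{N+1}]$, a compact set bounded away from $\partial I$. From the exhaustion $[a_j,b_j] \subset (a_{j+1},b_{j+1})$ with $I = \bigcup_{j}[a_j,b_j]$ I infer that $(a_j)$ strictly decreases to $\inf I$ while $(b_j)$ strictly increases to $\sup I$. Therefore, for $j$ large enough, $a_j + 1/j < a_{N+1}$ and $b_j - 1/j > b_{N+1}$. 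Since every derivative $\phi_j^{(k-l)}$ is supported in $[-1/j, 1/j]$, this forces
$$
\supp \phi_j^{(k-l)}(\cdot - a_j) \subset [a_j - 1/j,\, a_j + 1/j], \qquad \supp \phi_j^{(k-l)}(\cdot - b_j) \subset [b_j - 1/j,\, b_j + 1/j],
$$
and both intervals are eventually disjoint from $\supp \varphi_N$.

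To know that the scalar factors $u^{(l)}(a_j)$ and $u^{(l)}(b_j)$ are well-defined finite numbers, I would invoke the one-dimensional Sobolev embedding: since $0 \le l \le k-1 \le s-1$, the derivative $u^{(l)}$ belongs to $H^{s-l}_{loc}(I)$ with $s-l \ge 1$, so it admits a continuous representative on $I$ and the point values at $a_j, b_j \in I$ make sense. Combining this with the support observation above, for every $N \in \N$ and every sufficiently large $j$,
$$
\varphi_N \cdot \bigl(u^{(l)}(a_j)\, \phi_j^{(k-l)}(\cdot - a_j)\bigr) = 0,
$$
hence $p_N^{(s)}\bigl(u^{(l)}(a_j)\, \phi_j^{(k-l)}(\cdot - a_j)\bigr) = 0$ for all $j$ large. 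Since $N$ was arbitrary, this yields convergence to zero in $H^{s}_{loc}(I)$, and the reasoning for $u^{(l)}(b_j)\, \phi_j^{(k-l)}(\cdot - b_j)$ is the mirror image.

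The main takeaway is that no quantitative estimates are needed: the lemma is a consequence of the geometry of the construction, namely that the boundary bumps produced by mollification are located arbitrarily close to $\partial I$, while the seminorms of the Fr\'echet topology only test on compacta strictly inside $I$. The only point where one must be slightly careful is the preliminary justification of finiteness of the pointwise values through the Sobolev embedding $H^{s-l}_{loc}(I) \hookrightarrow C(I)$ for $s-l \ge 1$.
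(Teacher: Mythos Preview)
Your proof is correct and follows essentially the same support argument as the paper: both observe that the bumps $\phi_j^{(k-l)}(\cdot-a_j)$ live in $B_{1/j}(a_j)$, which is eventually disjoint from the support of any fixed test function $\varphi_N\in C^\infty_c(I)$, so the relevant seminorm vanishes for large $j$. The only cosmetic difference is that the paper routes the conclusion through convergence in $\mathcal{S}(\R)$ before passing to $H^{s}(\R)$, whereas you argue directly with the $H^{s}_{loc}$ seminorms; your added remark on the Sobolev embedding to justify finiteness of $u^{(l)}(a_j)$ is a point the paper leaves implicit.
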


\begin{proof}
We show the lemma for the case $m=2.$ The proof for the other cases is analogous.

For $\phi \in C^{\infty}_{c}(I),$
we have $\phi(\cdot )u'(a_{j})\phi_{j}(\cdot -a_{j}) \in C^{\infty}_{c}(\R).$ We claim  $\phi(\cdot )u'(a_{j})\phi_{j}(\cdot -a_{j})$ converges to $0$ in the topology of $\mathcal{S}(\R).$ Since $\mathcal{S}(\R) \hookrightarrow H^{s}(\R),$ it follows that the sequence converges to $0$ in $H^{s}(\R)$ and, therefore, $u'(a_{j})\phi_{j}(x-a_{j})$ converges to $0$ in $H^{s}_{loc}(\R).$

Note that 
$
[\phi(x)u'(a_{j})\phi_{j}(x-a_{j})]^{(k)} = u'(a_{j})\sum_{l=0}^{k} \binom{k}{l}\phi^{(k-l)}(x)\phi_{j}^{(l)}(x-a_{j})
$
for $k \in \N,$ so it remains to prove that
$
\sup_{x \in \R} |x|^{N} |\phi^{(k-l)}(x)\phi_{j}^{(l)}(x-a_{j})| \to 0,$ as $j \to \infty,
$
for $N \in \N.$ 

In order to do it, observe that $\supp \phi \subset I=(a,b)$ is compact, so $d(\supp \phi, a) > 0.$ On the other hand, $\supp \phi_{j}(\cdot - a_{j}) \subset B_{1/j}(a_{j})$ so there is a $j_{0} \in \N$ such that $B_{1/j}(a_{j}) \cap \supp \phi = \emptyset.$

Hence
$
\sup_{x \in \R} |x|^{N} \left|\phi^{(k-l)}(x)\phi_{j}^{(l)}(x-a_{j})\right| = 0,$ for $j \geq j_{0}
$
and
$$
\sup_{x \in \R}|x|^{N}\left|[\phi(x)u'(a_{j})\phi_{j}(x-a_{j})]^{(k)}\right| \leq \left|u'(a_{j})\right|\sum_{l=0}^{k}\sup_{x \in \R} |x|^{N} \left|\phi^{(k-l)}(x)\phi_{j}^{(l)}(x-a_{j})\right| 
$$
which is equal to $0$
for $j \geq j_{0}$ and the convergence holds.

The proof for the other sequences can be done in an analogous way.
\end{proof}

\begin{lemma} \label{lemma-hsloc}
If $h \in H^{s}_{loc}(I),$ with $s \in \Z$ and $s \geq 0,$ then $h_{j} \doteq \phi_{j} \star (\chi_{I_{j}} h_{e}) \in C^{\infty}_{c}(I), j \in \N,$ converges to $h$ in $H^{s}_{loc}(I).$
\end{lemma}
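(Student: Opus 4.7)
The plan is to prove that $p_k^{(s)}(h_j-h)=\|\varphi_k(h_j-h)\|_{H^s(\R)}\to 0$ as $j\to\infty$ for each fixed $k\in\N$, which is exactly what Fr\'echet convergence in $H^s_{loc}(I)$ demands. Since $s$ is a nonnegative integer, Theorem \ref{teosob} tells us that the norm $\|\cdot\|_{H^s(\R)}$ is equivalent to $\sum_{l=0}^{s}\|(\cdot)^{(l)}\|_{L^2(\R)}$; applying Leibniz's rule to $\varphi_k(h_j-h)$ and using that all derivatives of $\varphi_k$ are bounded with compact support in $I$, the problem reduces to establishing
\[
\bigl\|\varphi_k\,(h_j^{(l)}-h^{(l)})\bigr\|_{L^{2}(\R)}\longrightarrow 0,\qquad 0\le l\le s.
\]

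Using the explicit formula for $h_j^{(l)}$ derived just before Lemma \ref{lemmaseque} (applied to $u=h$), we split
\[
h_j^{(l)}=\phi_j\star[\chi_{I_j}h^{(l)}]+\sum_{r=0}^{l-1}\Bigl(h^{(r)}(a_j)\phi_j^{(l-1-r)}(\cdot-a_j)-h^{(r)}(b_j)\phi_j^{(l-1-r)}(\cdot-b_j)\Bigr).
\]
The boundary sum converges to $0$ in $H^s_{loc}(I)$ by Lemma \ref{lemmaseque}, and in particular its product with $\varphi_k$ tends to $0$ in $L^2(\R)$. Hence it suffices to prove $\phi_j\star[\chi_{I_j}h^{(l)}]\to h^{(l)}$ in $L^{2}$ on $\supp\varphi_k$.

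To handle the main term, fix $k$ and exploit the geometry of the exhaustion: since $\supp\varphi_k\subset I_{k+1}$ and $\overline{I_{k+1}}\subset I_{k+2}\subset I$, there exists $\delta>0$ with $d(\supp\varphi_k,\R\setminus I_j)\ge\delta$ for every $j\ge k+2$, so that $d(\supp\varphi_k,\R\setminus I_j)>1/j$ for all $j$ sufficiently large. Pick a cutoff $\eta\in C^{\infty}_{c}(I)$ equal to $1$ on a fixed compact neighborhood $V$ of $\supp\varphi_k$ with $\overline{V}\subset I$, and set $\tilde h\doteq\eta h^{(l)}\in L^{2}(\R)$, which is well defined because $h^{(l)}\in L^{2}_{loc}(I)$ whenever $l\le s$. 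For $j$ large enough that $\supp\phi_j\subset(-1/j,1/j)$ places the ball $B_{1/j}(x)$ inside $V\cap I_j$ for every $x\in\supp\varphi_k$, we have $\chi_{I_j}(y)h^{(l)}(y)=\tilde h(y)$ on that ball, hence
\[
(\phi_j\star[\chi_{I_j}h^{(l)}])(x)=(\phi_j\star\tilde h)(x),\qquad x\in\supp\varphi_k.
\]
Classical mollifier convergence (since $\phi_j$ is a standard mollifier and $\tilde h\in L^{2}(\R)$) gives $\phi_j\star\tilde h\to\tilde h=h^{(l)}$ in $L^2$ on $\supp\varphi_k$, completing the proof.

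The only delicate point is the simultaneous matching of three scales — the support of $\phi_j$, the compact set $\supp\varphi_k$, and the truncation by $\chi_{I_j}$ — so that the indicator $\chi_{I_j}$ becomes invisible on the region where $\varphi_k$ is active; once this geometric uniformity is extracted from the exhaustion, the argument collapses to the textbook statement that standard mollifiers approximate $L^2$ functions, combined with the boundary-term bound already furnished by Lemma \ref{lemmaseque}.
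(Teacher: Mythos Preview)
Your proof is correct and follows essentially the same strategy as the paper: reduce $H^{s}_{loc}$ convergence to $L^{2}$ convergence of each derivative on a fixed compact, invoke the explicit formula for $h_j^{(l)}$ together with Lemma~\ref{lemmaseque} to dispose of the boundary terms, and then verify that the main term $\phi_j\star[\chi_{I_j}h^{(l)}]$ converges to $h^{(l)}$ in $L^{2}$ locally. The only difference is in that last step, where the paper estimates directly via Minkowski's integral inequality and dominated convergence, whereas you introduce an auxiliary cutoff $\eta$ so as to quote the standard global $L^{2}(\R)$ mollifier theorem; both are routine.
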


\begin{proof}

First of all, we shall prove this lemma holds for $s=0$, i.e., if $h \in L^{2}_{loc}(I)$, then $h_{j}= \phi_{j} \star (\chi_{I_{j}} h_{e})$ converges to $h$ in $L^{2}_{loc}(I).$

Indeed, given $h \in L^{2}_{loc}(I)$ and $\varphi_{l}$ test function from a seminorm of $L^{2}_{loc}(I)$ it holds 
$$
\|\varphi_{l}h - \varphi h_{j}\|_{L^{2}(\R)} = \|\varphi_{l} h_{e} - \varphi_{l} h_{j}\|_{L^{2}(\R)} \leq C \|h_{e} - h_{j}\|_{L^{2}(I_{l+1})},
$$
where $C>0$ is a constant which depends only on $\varphi_{l}.$ 

Furthermore, for $j$ sufficiently large that $I_{l+1} + B_{1/j} \subset I_{l+2} \subset I_{j}$, we may write
$ h_{e}(x) - h_{j}(x) = \int_{B_{1/j}} \phi_{j}(y) (h_{e}(x)-h_{e}(x-y)) dy.$

However, note that $\phi_{j}(y)(h_{e}(\cdot)-h_{e}(\cdot-y)) \in L^{2}(I_{l+1})$ for every $y \in B_{1/j}$ and $\phi_{j}(\cdot)\big (h_{e}(x)-h_{e}(x-\cdot)\big) \in L^{2}(I_{1/j})$ for every $x \in I_{l+1},$ then by the Minkowski Inequality for Integrals it follows
$\|h_{e} - h_{j}\|_{L^{2}(I_{l+1})} \leq$
$$
\int_{B_{1/j}} |\phi_{j}(y)|\|h_{e}(\cdot) - h_{e}(\cdot - y)\|_{L^{2}(I_{l+1})} dy,$$ i.e.,
$\|h_{e} - h_{j}\|_{L^{2}(I_{l+1})} \leq \int_{\R} \phi_{j}(y)\chi_{B_{1/j}}(y)\|h_{e}(\cdot) - h_{e}(\cdot - y)\|_{L^{2}(I_{l+1})} dy.
$

Moreover, 
$
\|h_{e}(\cdot)\|_{L^{2}(I_{l+1})} = \|h\|_{L^{2}(I_{l+1})} \leq \|h\|_{L^{2}(I_{l+2})}
$
and
$$
\|h_{e}(\cdot - y)\|_{L^{2}(I_{l+1})} = 
\left(\int_{I_{l+1}+B_{1/j}} |h_{e}(z)|^{2} dz\right)^{1/2} \leq \left(\int_{I_{l+2}} |h_{e}(z)|^{2} dz\right)^{1/2},
$$
i.e., $\|h_{e}(\cdot) - h_{e}(\cdot - y)\|_{L^{2}(I_{l+1})} \leq 2 \|h\|_{L^{2}(I_{l+2})}$, with $\phi_{j}(y)\chi_{B_{1/j}}(y)\|h_{e}(\cdot) - h_{e}(\cdot - y)\|_{L^{2}(I_{l+1})}$ converging to zero when $j \to \infty$ a.e. $y \in \R$.

It is also true that
$
\phi_{j}(y)\chi_{B_{1/j}}(y)\|h_{e}(\cdot) - h_{e}(\cdot - y)\|_{L^{2}(I_{l+1})} \leq \chi_{(-1,1)}(y) 2 \|h_{e}\|_{L^{2}(I_{l+2})}
$ and $\chi_{(-1,1)}(y) 2 \|h_{e}\|_{L^{2}(I_{l+2})} \in L^{1}(\R)$
for every $y \in \R.$

By Dominated Convergence Theorem, we get
$$
\lim_{j \to \infty} \int_{\R} \phi_{j}(y)\chi_{B_{1/j}}(y)\|h_{e}(\cdot) - h_{e}(\cdot - y)\|_{L^{2}(I_{l+1})} dy = 0.
$$
In short, given $\epsilon > 0,$ there is a $j_{0} \in \N$ such that 
$
\|\varphi_{l}h - \varphi h_{j}\|_{L^{2}(\R)} \leq C \|h_{e} - h_{j}\|_{L^{2}(I_{l+1})} \leq C\epsilon,
$
for $j \geq j_{0}$ and the convergence in $L^{2}_{loc}(I)$ follows.

Now, for $h \in H^{k}_{loc}(I)$ with $k \in \N$ we have $\frac{d^{r}h}{dx^{r}} \in L^{2}_{loc}(I)$ for $0 \leq r \leq k$ and, by the first part of this proof, that $\phi_{j} \star \left[\chi_{I_{j}}\left(\frac{d^{r}h}{dx^{r}}\right)_{e}\right]$ converges to $\frac{d^{r}h}{dx^{r}}$ in $L^{2}_{loc}(I).$

Observe that 
$
\frac{d^{r}}{dx^{r}} \left[ \phi_{j} \star (\chi_{I_{j}} h_{e}) \right] = \phi_{j} \star \frac{d^{r}}{dx^{r}} (\chi_{I_{j}} h_{e}) 
$
and
$$
\frac{d^{r}}{dx^{r}} (\chi_{I_{j}} h_{e}) = \chi_{I_{j}} \left(\frac{d^{r}h}{dx^{r}}\right)_{e} + \sum_{l=0}^{r-1} \left(u^{(l)}(a_{j})\delta_{a_{j}}^{(r-1-l)} - u^{(l)}(b_{j})\delta_{b_{j}}^{(r-1-l)}\right).
$$
By the previous lemma, the sum $\sum_{l=0}^{r-1} \left(u^{(l)}(a_{j})\delta_{a_{j}}^{(r-1-l)} - u^{(l)}(b_{j})\delta_{b_{j}}^{(r-1-l)}\right)
$ converges to $0$ in $L^{2}_{loc}(I).$

Henceforth $\frac{d^{r}}{dx^{r}} \left[ \phi_{j} \star (\chi_{I_{j}} h_{e}) \right] =$
$$
= \phi_{j} \star \frac{d^{r}}{dx^{r}} (\chi_{I_{j}} h_{e}) =  \phi_{j} \star \left[ \chi_{I_{j}} \left(\frac{d^{r}h}{dx^{r}}\right)_{e} + \sum_{l=0}^{r-1} \left(u^{(l)}(a_{j})\delta_{a_{j}}^{(r-1-l)} - u^{(l)}(b_{j})\delta_{b_{j}}^{(r-1-l)}\right) \right]
$$
and then, for each $1 \leq r \leq k$, it is true that
$
L^{2}_{loc}(I) - \lim_{j \to \infty} \frac{d^{r}}{dx^{r}} \left[ \phi_{j} \star (\chi_{I_{j}} h_{e}) \right] = \frac{d^{r}h}{dx^{r}},
$
i.e., $h_{j} = \phi_{j} \star (\chi_{I_{j}} h_{e})$ converges to $h$ in $H^{k}_{loc}(I).$

\end{proof}

\begin{theorem} \label{teo-fecho}
If $a(D): H^{s+m}_{0}(I) \subset H^{s}_{loc}(I) \to H^{s}_{loc}(I)$ is an elliptic differential operator, with constant coefficients, given by $\sum_{j=1}^{m} (-2\pi i)^ka_{k} u^{(k)}$ where $s \in \Z_{+}$, then its closure is given by $\overline{a(D)}: H^{s+m}_{loc}(I) \subset H^{s}_{loc}(I) \longrightarrow H^{s}_{loc}(I)$ with
$   \overline{a(D)}(u) = \displaystyle \sum_{j=1}^{m} (-2\pi i)^ka_{k} u^{(k)}.
$
\end{theorem}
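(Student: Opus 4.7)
The plan is to introduce the candidate operator $A : H^{s+m}_{loc}(I) \subset H^{s}_{loc}(I) \to H^{s}_{loc}(I)$ defined by the same formula $A u = \sum_{k=0}^{m}(-2\pi i)^{k} a_k u^{(k)}$, prove it is closed, prove it extends $a(D)$ (which is trivial, since $H^{s+m}_0(I) \subset H^{s+m}_{loc}(I)$), and then use the mollifier sequence constructed just before the theorem to show $D(A)$ lies inside $D(\overline{a(D)})$. The minimality of the closure then forces $\overline{a(D)} = A$.

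First I would verify closedness of $A$. Suppose $u_j \in H^{s+m}_{loc}(I)$ with $u_j \to u$ and $A u_j \to f$ in $H^{s}_{loc}(I)$. Since $H^{s}_{loc}(I)$ embeds continuously in $\mathcal{D}'(I)$ and differential operators with constant coefficients are continuous on $\mathcal{D}'(I)$, we pass to the limit and obtain $a(D)u = f$ in $\mathcal{D}'(I)$. The ellipticity of $a(D)$ (equivalent to $a(D)'$ being elliptic, since constant-coefficient transposition only changes signs of coefficients) gives the full regularity gain $\delta = 1$ in Theorem \ref{hormander}(5): from $a(D)u = f \in H^{s}_{loc}(I)$ we conclude $u \in H^{s+m}_{loc}(I) = D(A)$, and $Au = f$. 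Hence $A$ is closed.

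Next I would show that $H^{s+m}_{loc}(I) \subset D(\overline{a(D)})$ with $\overline{a(D)}u = Au$ there. Given $u \in H^{s+m}_{loc}(I)$, define $u_j = \phi_j \star (\chi_{I_j} u_e) \in C^{\infty}_c(I) \subset H^{s+m}_0(I)$. Applying Lemma \ref{lemma-hsloc} with $s$ replaced by $s+m$ (admissible since $s \in \Z_+$ forces $s+m \in \Z_+$) yields $u_j \to u$ in $H^{s+m}_{loc}(I)$, hence $u_j \to u$ in $H^{s}_{loc}(I)$ by the continuous embedding $H^{s+m}_{loc}(I) \hookrightarrow H^{s}_{loc}(I)$. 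Since $a(D)$ is a differential operator of order $m$ with smooth coefficients, it maps $H^{s+m}_{loc}(I)$ continuously into $H^{s}_{loc}(I)$ (localize by the test functions $\varphi_j$ and use Theorem \ref{multi-sobolew} together with the fact that differentiation decreases Sobolev exponent by one), so $a(D) u_j \to a(D)u$ in $H^{s}_{loc}(I)$. Thus $(u, Au) \in \overline{G(a(D))}$ and $u \in D(\overline{a(D)})$ with $\overline{a(D)} u = Au$.

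Combining these two steps: from the second step, $D(A) \subset D(\overline{a(D)})$, and since $A$ is a closed extension of $a(D)$, minimality of the closure gives $\overline{a(D)} \subset A$, hence $D(\overline{a(D)}) \subset D(A) = H^{s+m}_{loc}(I)$, yielding equality of the two operators. The main obstacle is the closedness argument: everything hinges on the full regularity gain $\delta = 1$, which is exactly where the ellipticity hypothesis is essential—for a merely hypoelliptic operator one would only obtain $u \in H^{s+\delta m}_{loc}(I)$ and a strictly smaller candidate domain would need to be considered.
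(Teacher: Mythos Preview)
Your proposal is correct and follows essentially the same approach as the paper: ellipticity (via Theorem~\ref{hormander} with $\delta=1$) gives the inclusion $D(\overline{a(D)})\subset H^{s+m}_{loc}(I)$, and the mollifier sequence $u_j=\phi_j\star(\chi_{I_j}u_e)$ together with Lemma~\ref{lemma-hsloc} gives the reverse inclusion. The only cosmetic difference is that you package the first inclusion as ``$A$ is closed, hence $\overline{a(D)}\subset A$ by minimality,'' and for the second you invoke Lemma~\ref{lemma-hsloc} at regularity level $s+m$ and then use continuity of $a(D):H^{s+m}_{loc}(I)\to H^{s}_{loc}(I)$, whereas the paper works directly with the derivative formulas preceding Lemma~\ref{lemmaseque} to verify $a(D)u_j\to a(D)u$ in $H^{s}_{loc}(I)$; the underlying mathematics is identical.
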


\begin{proof}
Let 
$
\overline{a(D)}: D\left[\overline{a(D)}\right] \subset H^{s}_{loc}(I) \longrightarrow H^{s}_{loc}(I)
$
be the closure of 
$
a(D): H^{s+m}_{0}(I) \subset H^{s}_{loc}(I) \longrightarrow H^{s}_{loc}(I),
$
where $D\left[\overline{a(D)}\right] =$
$\Big\{ u \in H^{s}_{loc}(I); \; \exists \; (u_{j})_{j \in \N} \subset H^{s+m}_{0}(I) 
\mbox{ and } $ $f \in H^{s}_{loc}(I) 
\mbox{ s.t. } u_{n} \xrightarrow{H^{s}_{loc}} u \mbox{ and } a(D)u_{j} \xrightarrow{H^{s}_{loc}} f \Big\}.$

By the definition of $D\left[\overline{a(D)}\right],$ it immediately follows that $D\left[\overline{a(D)}\right] \subset H^{s+m}_{loc}(I)$, since every $u \in D\left[\overline{a(D)}\right]$ is limit $H^{s}_{loc}$ of a sequence of functions of $H^{s+m}_{0}.$  Furthermore, $u_{n} \xrightarrow{H^{s}_{loc}} u$ and $a(D)u_{n} \xrightarrow{H^{s}_{loc}} f$ imply that $a(D)u = f$ in $\Dl(I)$ and, since $a(D)$ is elliptic, $u \in H^{s+m}_{loc}(I).$

On the other hand, for $u \in H^{s+m}_{loc}(I),$ we have that $f \doteq a(D)u \in H^{s}_{loc}(I)$. Let $u_{n} = \varphi_{n} \star (\chi_{I_{n}} u_{e}),$ $n \in \N,$ then by what we have done above the theorem holds.
\end{proof}




\subsection{Spectrum of the Laplace operator on a Fr\'echet Space}

In this section we apply the results obtained in the previous section to the Laplacian operator. The main characteristic that allow us to apply these results is the fact that both the laplacian and its adjoint are elliptic operators. \footnote{The Laplacian is a self-adjoint operator in the context of pseudodifferential operators.}

First of all, since $\Delta u =u''$, from what we have discussed above, it follows that $\Delta^{*}g=g''=\Delta g$. Furthermore, since $D(\Delta)=H^{2}_0(I)$, we have $D(\Delta^{*})=H^{2}_c(I)$ (here we use $s=0$).

On the other hand, both symbols of $\Delta$ and $\Delta\mbox{*}$ are given by $a(\xi) = -4\pi^{2} \xi^{2}.$ Take $C = 4 \pi^{2}$ then 
$
|a(\xi)| = 4 \pi^{2} \xi^{2} \geq C |\xi|^{2}
$
and, by Definition \ref{def-elipt}, it follows that $\Delta$ and $\Delta\mbox{*}$ are elliptic.

\begin{corollary} The Laplace operator, seen as a  pseudodiferencial operator 
$
\Delta: H^{2}_{0}(0,\pi) \subset L^{2}_{loc}(0,\pi) \longrightarrow L^{2}_{loc}(0,\pi)
$
and its adjoint
$
\Delta^{\ast}: H^{2}_{c}(0,\pi) \subset L^{2}_{c}(0,\pi) \longrightarrow L^{2}_{c}(0,\pi),
$
both have resolvent set empty and their spectra are classified as follows:
$\sigma_{p}(\Delta) = \sigma_{p}(\Delta\mbox{*}) = \emptyset,$
$\sigma_{r}(\Delta) =\sigma_{c}(\Delta\mbox{*}) = \emptyset, \nonumber$ and $\sigma_{c}(\Delta)=\sigma_{r}(\Delta\mbox{*}) = \Comp.$    

\end{corollary}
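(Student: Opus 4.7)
The plan is to recognize that this Corollary is an immediate application of Theorem \ref{teo-principal} to the specific operator $a(D)=\Delta$, so the work reduces to verifying the hypotheses of that theorem in the case $s=0$, $m=2$. Since $L^{2}_{loc}(0,\pi)=H^{0}_{loc}(0,\pi)$, the Laplacian on $(0,\pi)$ fits the general scheme $a(D):H^{s+m}_{0}(I)\subset H^{s}_{loc}(I)\to H^{s}_{loc}(I)$ with constant-coefficient symbol $a(\xi)=-4\pi^{2}\xi^{2}$ of order $m=2$.

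The only nontrivial hypothesis to check is that the formal transpose $\Delta'$ is elliptic in the sense of Definition \ref{def-elipt}. Two integrations by parts show that $\Delta$ is formally self-adjoint, so $\Delta'=\Delta$ and hence $\Delta'$ has symbol $a(\xi)=-4\pi^{2}\xi^{2}$ as well. Taking $c_{K}=4\pi^{2}$ (independently of the compact $K\subset (0,\pi)$) and any $C_{K}>0$, we obtain $|a(\xi)|=4\pi^{2}|\xi|^{2}\geq c_{K}|\xi|^{m}$ for every $\xi\in\R$ with $|\xi|\geq C_{K}$, so $\Delta'$ is elliptic. In particular $\Delta'$ is hypoelliptic by Theorem \ref{hormander}, so the full hypothesis set of Theorem \ref{teo-principal} is satisfied.

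With the hypotheses in hand, Theorem \ref{teo-principal} applied with $s=0$ and $m=2$ yields directly that both $\Delta$ and $\Delta^{\ast}$ have empty resolvent sets and that
\[
\sigma_{p}(\Delta)=\sigma_{p}(\Delta^{\ast})=\emptyset,\quad \sigma_{r}(\Delta)=\sigma_{c}(\Delta^{\ast})=\emptyset,\quad \sigma_{c}(\Delta)=\sigma_{r}(\Delta^{\ast})=\Comp,
\]
which are precisely the statements of the Corollary. The identification $D(\Delta^{\ast})=H^{2}_{c}(0,\pi)$ used in the Corollary comes from the Corollary to the characterization theorem on $D[a(D)^{\ast}]$, which gives $D[a(D)^{\ast}]=H^{-s+m}_{c}(I)=H^{2}_{c}(0,\pi)$ once $a(D)'$ is elliptic.

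Since the heavy analytic work (constructing explicit nontrivial elements in $\ker(\lambda-a(D)^{\ast})^{\perp}$ via exponentials $e^{\xi_{0}x}$, verifying that compactly supported exponentials must vanish, and ruling out boundary conditions for $u\in H^{s+m}_{0}(I)$) has already been carried out in the proof of Theorem \ref{teo-principal}, no new obstacle arises; the only work here is the elementary verification of ellipticity above, which is immediate from the explicit form of the symbol.
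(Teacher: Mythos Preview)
Your proposal is correct and follows essentially the same approach as the paper: verify that the symbol $a(\xi)=-4\pi^{2}\xi^{2}$ of $\Delta$ (and of its formal transpose, since $\Delta'=\Delta$) satisfies the ellipticity estimate of Definition~\ref{def-elipt}, then invoke Theorem~\ref{teo-principal} with $s=0$, $m=2$. The paper's own proof is a one-line appeal to ellipticity and Theorem~\ref{teo-principal}, with the symbol check and the identification $D(\Delta^{\ast})=H^{2}_{c}(0,\pi)$ carried out in the paragraph immediately preceding the Corollary, exactly as you do.
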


\begin{proof}
This corollary follows immediately from the ellipticity of $\Delta$ and $\Delta\mbox{*}$ and Theorem \ref{teo-principal}.

\end{proof}

Finally, using the results obtained for  $\Delta$ with Theorem \ref{teo-fecho} its possible to obtain a better analysis for its spectrum.

By Theorem \ref{teo-fecho}, with $s=0$, it follows that
$D\left[\overline{\Delta}\right] = H^{2}_{loc}(I)$ and
$$
\overline{\Delta}: H^{2}_{loc}(I) \subset L^{2}_{loc}(I) \longrightarrow L^{2}_{loc}(I)
$$
is given by $\overline{\Delta}u = u''$, for $u \in H^{2}_{loc}(I).$

Denote by $\Delta_{L^{2}}$ the Laplacian defined on the domain
$$
\Delta_{L^{2}}: H^{1}_{0}(I) \cap H^{2}(I) \subset L^{2}_{loc}(I) \longrightarrow L^{2}_{loc}(I).
$$
Since $H^{2}_{0}(I) \subset H^{1}_{0}(I) \cap H^{2}(I),$ we have $\overline{\Delta} = \overline{\Delta_{L^{2}}}.$ Moreover, its point spectrum, $\sigma_{p}(\Delta_{L^{2}}),$ is the same as when we consider the topology of $L^{2}(I),$ i.e., $\sigma_{p}(\Delta_{L^{2}}) = \left\{-\frac{\pi^2n^{2}}{l(I)^2}: n \in \N \right\}$, where $l(I)$ is the length of $I.$

It remains to calculate $\sigma_{p}(\overline{\Delta}).$ If $\lambda \in {\mathbb C}$ and $u \in H^{2}_{loc}(I)$, $u\not =0$, are such that $u'' = \lambda u$, then $u(x) = C_{1}e^{\beta_{1}x} + C_{2}e^{\beta_{2}x}$, for some $C_{1},C_{2} \in \Comp$ and $\beta_{1}$ and $\beta_{2}$ the roots of $\lambda + \xi^{2},$ $\xi \in \R$. 
Hence, every $\lambda \in {\mathbb C}$ belongs to $\sigma_{p}(\overline{\Delta}).$

By Theorem \ref{fechadofrechet} we have $\sigma(\Delta) = \sigma(\Delta_{L^{2}}) = \sigma(\overline{\Delta}) = \Comp$ and the following table shows the results obtained for the Laplacian:


\begin{table}[ht]
\caption{}
\renewcommand\arraystretch{1.5}
\noindent\[ 
\begin{array}{|c|c|c|c|}
\hline
 & \Delta & \Delta_{L^{2}} & \overline{\Delta}\\
\hline \sigma_{p} & \emptyset   & \left\{-\frac{\pi^2n^{2}}{l(I)^2}: n \in \N \right\}   & \Comp \\
\hline \sigma_{r} & \emptyset   & \emptyset   & \emptyset\\
\hline \sigma_{c} & \Comp   & \Comp \setminus \left\{-\frac{\pi^2n^{2}}{l(I)^2}: n \in \N \right\}  & \emptyset\\
\hline
\end{array} 
\]
\end{table}

{\bf Acknowledgments.} This study was financed in part by the Coordena\c c\~ao de Aperfei\c coamento de Pessoal de N\'ivel Superior – Brasil (CAPES) – Finance Code 001.

\bibliographystyle{amsplain}

\begin{thebibliography}{99}

    \bibitem{follandedp} G. Folland, \textit{Introduction to Partial Differential Equations
  , 1995}.
    
    \bibitem{folland} G. Folland, \textit{Real Analysis - Modern Techniques and Their Applications
  , 1999}.
 
  \bibitem{taylor} A. E. Taylor, \textit{Introduction to Functional Analysis
  , 1958}.
 
  \bibitem{brezisaf} H. Brezis, \textit{Functional Analysis, Sobolev Spaces and Partial Differential Equations
  , 2011}.
 
  \bibitem{henry} D. Henry, \textit{Manuscripts on Pseudo-differential Operators - Universidade de S\~ao Paulo - IME, 2006.}
 
  \bibitem{wong} T. C. Chau and M. W. Wong and L. Pi, \textit{Spectra of Pseudo-Differential Operators on the
Schwartz Space, 1993.}

  \bibitem{costa_2019} Arag\~ao-Costa, E. R. An extension of the concept of exponential dichotomy in Fr\'echet spaces which is stable under perturbation. Communications on Pure and Applied Analysis, {\bf 18}, 845--868, 2019. 









\end{thebibliography}

\end{document}